\theoremstyle{plain}
\newtheorem{theorem}{Theorem}[section]
\newtheorem*{theorem*}{Theorem}
\newtheorem*{"theorem"}{``Theorem''}
\newtheorem{corollary}[theorem]{Corollary}
\newtheorem{proposition}[theorem]{Proposition}
\newtheorem{lemma}[theorem]{Lemma}
\theoremstyle{definition}
\theoremstyle{remark}
\newtheorem{example}[theorem]{Example}
\numberwithin{equation}{section}
\newcommand{\N}{\mathbb N}
\newcommand{\R}{\mathbb R} 
\newcommand{\E}{{\mathbb E}}
\newcommand{\F}{{\mathcal F}}
\newcommand{\LRa} {\Leftrightarrow}
\newcommand{\Ra} {\Rightarrow}
\newcommand{\dx}{\,\mathrm{d}x}
\newcommand{\ds}{\,\mathrm{d}s}
\newcommand{\dt}{\,\mathrm{d}t}
\newcommand{\eps}{\varepsilon}
\newcommand{\average}{{\mathchoice {\kern1ex\vcenter{\hrule height.4pt
width 6pt depth0pt} \kern-9.7pt} {\kern1ex\vcenter{\hrule
height.4pt width 4.3pt depth0pt} \kern-7pt} {} {} }}
\newcommand{\sw}[1]{{\color{black}#1}}
\begin{document}

\title[On differences between gradient flows in finite and infinite dimension]{
A qualitative difference between gradient flows of convex functions in finite- and infinite-dimensional Hilbert spaces
}

\author{Jonathan W. Siegel}
\address{Jonathan W. Siegel\\
Department of Mathematics\\
Texas A\&M University\\
155 Ireland Street\\
College Station, TX 77840
}
\email{jwsiegel@tamu.edu}

\author{Stephan Wojtowytsch}
\address{Stephan Wojtowytsch\\
University of Pittsburgh\\
Department of Mathematics\\
Thackeray Hall\\
Pittsburgh, PA 15221}
\email{s.woj@pitt.edu}

\date{\today}

\subjclass[2020]{
26A51, 
34A34
}
\keywords{Gradient flow, convex optimization, rate of convergence, curve of finite length}

\begin{abstract}
We consider gradient flow/gradient descent and heavy ball/accelerated gradient descent optimization for convex objective functions. In the gradient flow case, we prove the following:
\begin{enumerate}
    \item If $f$ does not have a minimizer, the convergence $f(x_t) \to \inf f$ can be arbitrarily slow.
    \item If $f$ does have a minimizer, the excess energy $f(x_t) - \inf f$ is integrable/summable in time. In particular, $f(x_t) - \inf f = o(1/t)$ as $t\to\infty$. 
    \item In Hilbert spaces, this is optimal: $f(x_t) - \inf f$ can decay to $0$ as slowly as any given function which is monotone decreasing and integrable at $\infty$, even for a fixed quadratic objective.
    \item In finite dimension (or more generally, for all gradient flow curves of finite length), this is not optimal: We prove that there are convex monotone decreasing integrable functions $g(t)$ which decrease to zero slower than $f(x_t)-\inf f$ for the gradient flow of any convex function on $\R^d$. For instance, we show that every gradient flow $x_t$ of a convex function $f$ in finite dimension satisfies $\liminf_{t\to\infty} \big(t\cdot \log^2(t)\cdot \big\{f(x_t) -\inf f\big\}\big)=0$.
\end{enumerate}
This improves on the commonly reported $O(1/t)$ rate and provides a sharp characterization of the energy decay law. We also note that it is impossible to establish a rate $O(1/(t\phi(t))$ for any function $\phi$ which satisfies $\lim_{t\to\infty}\phi(t) = \infty$, even asymptotically.

Similar results are obtained in related settings for (1) discrete time gradient descent, (2) stochastic gradient descent with multiplicative noise and (3) the heavy ball ODE. In the case of stochastic gradient descent, the summability of $\mathbb E[f(x_n) - \inf f]$ is used to prove that $f(x_n)\to \inf f$ almost surely -- an improvement on the convergence almost surely up to a subsequence which follows from the $O(1/n)$ decay estimate.
\end{abstract}

\maketitle


\section{Introduction}

In this note, we discuss two approaches in gradient-based optimization for convex objective functions: Steepest descent and the momentum method. They are described by the gradient flow ODE $\dot x = - \nabla f(x)$ and the heavy ball ODE $\ddot x = - \alpha\,\dot x - \nabla f(x)$ respectively. The heavy ball ODE is Newton's second for a particle of mass $m=1$ under the influence of a potential force $-\nabla f$ and a Stokes type friction with coefficient $\alpha$. 

If the objective function $f$ is merely convex, but not strongly convex, it is often favorable to let the coefficient of friction decay to zero, as the objective function can be very flat at its minimum. Constant friction in this setting dissipates kinetic energy too quickly, resulting in very slowly converging trajectories.  \cite{nesterov1983method, su2014differential} illustrate that the scaling $\alpha(t):= \frac{\alpha^*}t$ with $\alpha^*\geq 3$ balances the desirable effects of friction (extracting sufficient energy to dampen around the minimizer) against the undesirable (slowing down dynamics on the path to the minimizer).

In real applications, it is often not necessary to find a minimizer of $f$, but a point of low objective value. We therefore focus on studying the risk decay curves $g(t) = f(x(t)) - \inf f$ for gradient flows and heavy balls.

The rates which are commonly reported for convex optimization are $O(1/t)$ for gradient flows and $O(1/t^2)$ for the heavy ball ODE. In discrete time convex optimization, the rate $O(1/k^2)$ is optimal for any iterative method for which $x_{n+1} -x_n \in \mathrm{span}\{\nabla f(x_0), \dots, \nabla f(x_n)\}$ \cite[Section 2.1.2]{nesterov2003introductory}, and it is achieved by Nesterov's accelerated gradient method. However, at any finite iteration $k$, a different (convex, quadratic) function on a space of dimension $\geq 2k$ is used to construct a lower bound. For a fixed convex function, Nesterov's method decreases the objective function as $o(1/k^2)$ \cite{attouch2016rate}, but generally not as $O(1/k^{2+\eps})$ for any $\eps>0$.  \cite{su2014differential} demonstrate that
\begin{equation}\label{eq integrability nesterov intro}
\int_0^\infty t\big(f(x(t)) - \inf f\big) \dt < +\infty.
\end{equation}
In this work, we show that
\begin{enumerate}
    \item The characterization of the risk decay by \eqref{eq integrability nesterov intro} is essentially sharp even for a {\em fixed} quadratic function on a separable Hilbert space.
    
    \item For gradient descent, the sharp characterization is given by the corresponding integrability condition that
    \[
    \int_0^\infty \big(f(x(t)) - \inf f\big) \dt < + \infty.
    \]
    
    \item In {\em finite-dimensional Hilbert} spaces, gradient flows decrease the excess objective value $f(x(t)) - \inf f$ faster in the sense that a stricter decay condition holds than mere integrability. In particular, we show that
    \[
    \liminf_{t\to\infty} t\cdot \log^2(t) \cdot \big(f(x(t)) - \inf f\big) = 0.
    \]
    However, we demonstrate that statement cannot be strengthend by replacing the lower limit with an upper limit. The upper limit cannot be improved beyond the statement that $f(x(t)) - \inf f = o(1/t)$, even in dimension $d=1$.

    The improvement is based on the fact that gradient flow trajectories of convex objective functions have finite length in finite-dimensional Hilbert spaces. In the infinite-dimensional case, this may not be true.

    \item We extend the integrability condition to gradient descent in discrete time both in the deterministic setting and the stochastic setting with noise which scales in a multiplicative fashion. Here we show that the SGD iterates satisfy
    \[
    \sum_{n=0}^\infty \E\big[ f(x_n) - \inf f\big] < +\infty 
    \]
    and deduce that $f(x_n)\to \inf f$ almost surely. Absent summability, the statement would remain true almost surely only along a subsequence.

    \item All previous results are valid under the assumption that $f$ is not convex {\em and} that there exists a point $x^*$ such that $f(x^*) = \inf f$. We show that without this assumption, the decay $f(x(t)) \to \inf f$ may be arbitrarily slow for both gradient flow and heavy ball ODE.
\end{enumerate}

The improvement to the bounds is qualitative and asymptotic in nature. While quantitative and non-asymptotic bounds have great benefits, the constants involved in the bounds are rarely available in practice. We therefore maintain that a sharp qualitative understanding of the convergence towards a minimum value is helpful.

Convex functions without minimizers are very common for example in applications in machine learning for classification if the cross-entropy loss function is used. 

We believe that some of these results may be familiar to experts in the field, but we have been unable to find references for many of them. A principal goal of this work is to address this gap and provide a simple introduction to sharp results on gradient flows and accelerated gradient methods.

The article is structured as follows: Continuous time gradient flows are discussed in Section \ref{section gradient flow} with special attention to the impact of finite dimension and the existence of minimizers. Corresponding results are obtained for gradient descent and stochastic gradient descent in Section \ref{section gradient descent}. Momentum methods are only considered in continuous time in Section \ref{section heavy ball}, but references to corresponding discrete time results are provided in the appropriate places. A technical results concerning convex functions whose derivative is $L^{1/2}$-integrable on $(0,\infty)$ is postponed until the appendix.

\subsection{Significance in Machine Learning}

Our main motivation for the study of gradient-based optimizers is the recent popularity of simple first order optimization algorithms in machine learning and specifically in deep learning. They have been used with great success to minimize high-dimensional functions like
\[
L(w) = \E_{(x,y)\sim \mu} \big[ \|h(w,x) - y\|^2\big]
\quad\text{or the discretization }\quad 
L_n(w) = \frac1n \sum_{i=1}^n \big\|h(w,x_i) - y_i\big\|^2.
\]
Here $h$ denotes a parametrized function (e.g.\ a neural network) with parameters (`weights') $w\in\R^m$ and data $x\in\R^d$. The expression inside the expectation or sum can be more general -- in classification, the cross-entropy loss
\[
\ell_{ce}\big(h(w,x), \,y\big) := -\log\left(\frac{\exp(h(w,x) \cdot y)}{\sum_{j=1}^k \exp(h_j(w,x))}\right)
\]
is more popular than the `mean squared error' (MSE)-loss = $\ell^2$-loss \cite{hui2020evaluation}. Here $y = e_l$ is a vector corresponding to the label $l\in\{1,\dots,k\}$ of the point $x$. The `loss function' inside the expectation/empirical average $L(h) = \E[\ell(h(x),y)]$ is typically convex, but not necessarily strictly convex. For instance, cross-entropy loss fails to be strictly convex since $\ell(h+\lambda(1,\dots,1), \,y) \equiv \ell(h,y)$ for all $\lambda\in\R$, i.e.\ the loss function is constant in one direction. Additionally, $\inf_h\ell_{ce}(h, y) = \lim_{\lambda\to\infty} \ell_{ce}(\lambda y, y) =0$, but $\ell_{ce}(h,y)>0$ for any $h, y$, so $\ell_{ce}$ does not admit minimizers.

If, for instance, $h(W, x) = Wx$ is a linear model with $W\in \R^{k\times d}$, then $L_n$ is convex. If additionally there exists $W^*$ such that all points are classified correctly in the sense that
\[
h(W^*,x_i) \cdot y_i > \max_{e_j\neq y_i} h(W^*,x_i)\cdot e_j,
\]
then there exists no minimizer of $L_n$ since $L_n(W)>0$ for all $W$ but $\lim_{\lambda\to\infty}L_n(\lambda W^*) = 0$. The second point remains valid if the parametrized function class is not linear, but merely a cone (e.g.\ a class of neural networks).

We remark that cross-entropy has other favorable properties which distinguish it from the worst case scenarios discussed above. For instance, $\ell_{ce}$ has exponential tails, i.e.\ it vanishes very quickly at infinity. Such properties can be captured for example in the language of Polyak-\L ojasiewicz (PL) conditions, which have been used very successfully to study gradient flows (but not heavy ball methods). Notably, mere convexity is not enough to study gradient-based optimization for objective functions without minimizers.

For parameterized functions $h(w,\cdot)$ which depend on their parameters in a non-linear fashion (such as neural networks) the functional $L_n(w):= \frac1n\sum_{i=1}^n \ell \big(h(w,x_i), y_i\big)$ generally fails to be convex, even if the loss function $\ell$ is convex in the first argument:
\begin{itemize}
    \item In the underparametrized regime of neural network learning, Safran and Shamir proved that the loss landscape generally contains many non-optimal local minimizers \cite{DBLP:conf/icml/SafranS18}.
    \item In the overparametrized regime, the set of minimizing parameters generally is a high-dimensional submanifold of the parameter space. Negative Hessian eigenvalues have been observed numerically close to the set of minimizers \cite{sagun2016eigenvalues, sagun2017empirical, alain2019negative}, and their existence has been justified theoretically in \cite{wojtowytsch2021stochasticdiscrete}.
\end{itemize}

Despite this non-convexity of the loss landscape, it has been observed that the weights of a neural network may remain in a `good region' and closely follow the trajectory of optimizing a linear model which is obtained by linearizing the neural network at the law of its initialization. This `neural tangent kernel' (NTK) was considered for gradient descent in \cite{weinan2019comparative, jacot2018neural, du2018gradient, du2019gradient, arora2019exact} and for momentum-based optimization in \cite{liu2022provable}. The NTK is linear in its parameters, and thus the loss landscape associated to the parameter optimziation process is convex if $\ell$ is a convex function. This suggests that locally around a good initialization, the optimization landscape looks similar to that of a convex function. The crucial observation is that trajectories of gradient flows remain in this `good' region for all time under suitable conditions.

Even globally, for very wide networks there are no strict local minimizers which are not global minimizers \cite{venturi2018spurious, wang2021hidden}. In this way, convex optimization informs the intuition of parameter optimization in deep learning, at least in a heavily overparametrized regime. 

Finding exact minimizers of the empirical loss function $L_n$ is not always attractive in deep learning, where the true goal is to find minimizers of the (unknown) function $L$. The question whether a parameter $w$ `generalizes' well (performs well on previously unseen data sampled from the same distribution which generated the training data $(x_i, y_i)$, $i=1,\dots,n$) is generally considered more important than how close it is to the true optimal parameter $w^*$. At the optimal parameter for a given data sample, we may `overfit' to random noise in the training data with possibly disastrous implications for generalization.

This motivates us to primarily consider the convergence of $f(x_t)$ rather than that of $x_t$.

\subsection{Technical tools}

Most proofs in the following are elementary. We recall a statement which will be used frequently throughout the article.

\begin{lemma}\label{lemma monotone integrable decay}
    Let $f:(0,\infty) \to (0,\infty)$ be a monotone decreasing function such that $\int_0^\infty f(x)\dx <\infty$. Then
    \[
    f(x) \leq \frac{2\int_0^\infty f(t)\dt}x\qquad\text{and }
    \quad
    \lim_{x\to\infty} x\cdot f(x) =0.
    \]
\end{lemma}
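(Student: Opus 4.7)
The two claims rest on the same observation: by monotonicity, the value $f(x)$ is controlled by the integral of $f$ over the window $[x/2,x]$. Concretely, since $f$ is nonincreasing we have $f(t)\geq f(x)$ for every $t\in[x/2,x]$, and therefore
\[
\frac{x}{2}\,f(x)\;\leq\;\int_{x/2}^{x} f(t)\,\mathrm{d}t.
\]
This single inequality is the engine of the whole lemma.

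\textbf{Proof of the first inequality.} Starting from the display above, I simply enlarge the domain of integration to $(0,\infty)$ to get
\[
\frac{x}{2}\,f(x)\;\leq\;\int_{x/2}^{x} f(t)\,\mathrm{d}t\;\leq\;\int_{0}^{\infty} f(t)\,\mathrm{d}t,
\]
and multiplying through by $2/x$ yields $f(x)\leq \tfrac{2}{x}\int_0^\infty f(t)\,\mathrm{d}t$, which is the first claim.

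\textbf{Proof of the limit.} For the second claim I use the same estimate but keep the tail integral on the right. Since $\int_0^\infty f(t)\,\mathrm{d}t<\infty$, the tails satisfy
\[
\int_{x/2}^{\infty} f(t)\,\mathrm{d}t\;\longrightarrow\;0\qquad\text{as }x\to\infty,
\]
so a fortiori $\int_{x/2}^{x} f(t)\,\mathrm{d}t\to 0$. Combining this with $\tfrac{x}{2}f(x)\leq \int_{x/2}^{x} f(t)\,\mathrm{d}t$ gives $x f(x)\to 0$, which is the second claim.

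\textbf{Obstacles.} There is essentially no obstacle here; the only thing to watch is that $f$ need not be integrable near $0$ to conclude anything (the hypothesis is integrability on $(0,\infty)$, which is automatic at $0$ only if $f$ is finite there). The window $[x/2,x]$ avoids this issue entirely for $x$ away from $0$, and for the asymptotic statement only large $x$ matters. One could equally well use the window $[x,2x]$; the factor $2$ in the prefactor of the first bound would not change.
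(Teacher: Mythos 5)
Your proof is correct and follows essentially the same route as the paper's: both rest on the estimate $\tfrac{x}{2}f(x)\leq \int_{x/2}^{x}f(t)\,\mathrm{d}t$, deducing the pointwise bound by enlarging the integration domain and the limit by letting the tail integral vanish.
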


\begin{proof}
    Since $f$ is decreasing, we find that 
    \[
    \int_{x/2}^\infty f(t)\dt \geq \int_{x/2}^x f(t)\dt \geq \int_{x/2}^x f(x)\dt = \frac{x}2 \cdot f(x)\geq 0
    \]
    On the other hand, since $f$ is integrable, we find that 
    \[
    \lim_{x\to\infty} \int_{x/2}^\infty f(t)\dt =0.
    \]
    The result follows by the sandwich criterion.
\end{proof}

We briefly note that it is impossible to quantify the convergence in Lemma \ref{lemma monotone integrable decay}. A stronger version of Example \ref{example no quantitative improvement} is given below in Example \ref{example no quantitative improvement l12}.

\begin{example}\label{example no quantitative improvement}
Assume that $\phi:[0,\infty)\to\R$ is a monotone increasing function such that $\lim_{t\to\infty}\phi(t) = +\infty$. We aim to show that there exists a monotone decreasing integrable function $g:[0,\infty)\to\R$ such that 
\[
\limsup_{t\to\infty} \big(t\cdot \phi(t)\cdot g(t)\big) = +\infty.
\]
In other words, we cannot guarantee that $g(t) \leq \frac{C}{t\,\phi(t)}$ for all large times for any given constant $C$. Of course, if $1/ (t\cdot \phi(t))$ fails to be integrable (e.g.\ for $\phi(t) = \log t$), then $\liminf_{t\to\infty}\big(t\cdot \phi(t)\cdot g(t)\big) =0$.

Let $R_n$ be a monotone increasing sequence of positive numbers such that the series 
\[
\sum_{n=1}^\infty \frac1{\sqrt{\phi(R_n)}} \quad\text{ converges. Then }
g(t) = \sum_{n=1}^\infty \frac1{R_n\,\sqrt{\phi(R_n)}}\,1_{(0, R_n]}(t)
\]
satisfies 
\[
\int_0^\infty g(t) \dt = \sum_{n=1}^\infty R_n \cdot \frac1{R_n\,\sqrt{\phi(R_n)}} = \sum_{n=1}^\infty \frac1{\sqrt{\phi(R_n)}} < +\infty
\]
and 
\begin{align*}
\limsup_{t\to\infty} \big(t\cdot \phi(t)\cdot g(t)\big) &\geq \limsup_{n\to\infty}\big(R_n \cdot \phi(R_n) \cdot g(R_n) \big) \\
    &\geq \limsup_{n\to\infty}\left(R_n\cdot \phi(R_n) \cdot \frac1{R_n\,\sqrt{\phi(R_n)}}1_{(0, R_n]}(R_n)\right) = \lim_{n\to\infty}\sqrt{\phi(R_n)} = +\infty.
\end{align*}
Furthermore, $g$ is a sum of decreasing functions and thus decreasing as well. It is easy to extend the example to functions which are continuous.
\end{example}

\section{Gradient flows in continuous time}\label{section gradient flow}
\subsection{Gradient flows for convex functions: general observations}

We present gradient flows in the context of finite-dimensional Euclidean spaces, but all arguments carry over directly to Hilbert spaces. For the sake of avoiding technical complications, we avoid thinking about infinite-dimensional spaces except for Section \ref{section Hilbert}, where they are needed for a counterexample. We note however that differences between gradient flows in finite-dimensional and infinite-dimensional Hilbert spaces are well-documented -- \cite{baillon1978exemple} gives an example of a gradient flow of a convex function in an infinite-dimensional Hilbert space which does not converge in the norm topology. A comparable guarantee for heavy ball optimization is given in \cite{attouch2018fast}.

Let $f:\R^m\to \R$ be a convex $C^1$-function and assume that $x$ solves the gradient-flow equation
$
\dot x_t = - \nabla f(x_t).
$
Then by construction, the energy dissipation identity
\[
\frac{d}{dt} f(x_t) = \nabla f(x_t)\cdot \dot x_t = - \|\nabla f(x_t)\|^2\leq 0
\]
holds. We review some additional well-known results.

\begin{lemma}
Let $x^*\in \R^m$. Then the function $L:[0,\infty) \to \R$,
\[
L(t) = t\,\big(f(x_t) - f(x^*)\big) + \frac12 \|x_t - x^*\|^2
\]
is non-increasing.
\end{lemma}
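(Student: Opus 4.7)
The plan is a direct differentiation argument. I would compute $L'(t)$ by the product rule and then bound the result using the two ingredients naturally available for a gradient flow of a convex function: the energy dissipation identity $\frac{d}{dt}f(x_t) = -\|\nabla f(x_t)\|^2$ and the gradient inequality for convex $f$, namely $f(x^*) \geq f(x_t) + \nabla f(x_t)\cdot(x^* - x_t)$, equivalently $f(x_t) - f(x^*) \leq \nabla f(x_t)\cdot (x_t - x^*)$.

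Concretely, differentiating term by term gives
\[
L'(t) = \big(f(x_t) - f(x^*)\big) + t\,\frac{d}{dt}f(x_t) + (x_t - x^*)\cdot \dot x_t.
\]
Substituting $\dot x_t = -\nabla f(x_t)$ and $\frac{d}{dt}f(x_t) = -\|\nabla f(x_t)\|^2$, this becomes
\[
L'(t) = \big(f(x_t) - f(x^*)\big) - t\,\|\nabla f(x_t)\|^2 - (x_t - x^*)\cdot \nabla f(x_t).
\]
Applying the convexity inequality to the first term, the two inner-product terms cancel and we are left with
\[
L'(t) \leq -t\,\|\nabla f(x_t)\|^2 \leq 0,
\]
which gives the claim.

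There is no real obstacle here: the only subtlety is that one needs $f \in C^1$ (given in the setup) for the chain rule along the trajectory, and the factor $t$ in front of the dissipation term needs to be nonnegative, which is precisely why the statement is restricted to $t \in [0, \infty)$. The proof is essentially three lines and uses nothing beyond the two identities already recorded in the preceding paragraph of the paper.
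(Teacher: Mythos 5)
Your proposal is correct and follows exactly the same route as the paper: differentiate $L$, substitute the gradient flow equation and the dissipation identity, and apply the first-order convexity inequality $f(x_t)-f(x^*)\leq \nabla f(x_t)\cdot(x_t-x^*)$ to obtain $L'(t)\leq -t\,\|\nabla f(x_t)\|^2\leq 0$. Nothing to add.
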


We refer to $L$ as the Lyapunov function associated with $x^*$.

\begin{proof}
Due to the first order convexity condition, we find that
\begin{align*}
L'(t) &= \big(f(x_t) - f(x^*)\big) + t\,\nabla f(x_t)\cdot \dot x_t + \langle x_t - x^*, \dot x_t\rangle\\
	&= f(x_t) + \langle \nabla f(x_t), \,x^*-x_t\rangle - f(x^*) - t\,\|\nabla f(x_t)\|^2\\
	&\leq  - t\,\|\nabla f(x_t)\|^2\\
	&\leq 0.
\end{align*}
\end{proof}

As an immediate corollary, we find that gradient flows are consistent in convex optimization, irrespective of whether the minimum is attained, or even finite.

\begin{corollary}
If $f$ is convex and $x_t$ a gradient flow of $f$, then $\lim_{t\to \infty} f(x_t) = \inf_{x\in\R^m} f(x)$.
\end{corollary}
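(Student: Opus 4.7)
The plan is to extract the statement directly from the Lyapunov lemma by evaluating at arbitrary $x^*$ and then taking an infimum. First I would observe that the monotonicity $L(t) \leq L(0)$ gives, for every $x^*\in\R^m$ and every $t>0$,
\[
t\,\big(f(x_t) - f(x^*)\big) \;\leq\; L(t) - \tfrac12\|x_t - x^*\|^2 \;\leq\; L(0) \;=\; \tfrac12\|x_0 - x^*\|^2,
\]
so that $f(x_t) \leq f(x^*) + \frac{\|x_0 - x^*\|^2}{2t}$.

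From here the argument is a bookkeeping exercise. Letting $t\to\infty$ with $x^*$ fixed yields $\limsup_{t\to\infty} f(x_t) \leq f(x^*)$, and since $x^*$ was arbitrary I would then take the infimum over $x^*\in\R^m$ to obtain
\[
\limsup_{t\to\infty} f(x_t) \;\leq\; \inf_{x\in\R^m} f(x).
\]
The trivial lower bound $f(x_t) \geq \inf f$ forces $\liminf_{t\to\infty} f(x_t) \geq \inf f$, and the two bounds together give $\lim_{t\to\infty} f(x_t) = \inf f$.

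The only subtlety worth commenting on is what happens when $\inf f = -\infty$: in that case the same inequality applied to a minimizing sequence $x^*_k$ with $f(x^*_k) \to -\infty$ still gives $\limsup_{t\to\infty} f(x_t) = -\infty$, and the conclusion holds with the limit being $-\infty$. I do not anticipate any real obstacle here; the Lyapunov lemma does all the work, and this corollary is essentially just the observation that the constant $L(0)$ does not depend on which comparison point $x^*$ one picks, which is precisely what allows the conclusion to hold without requiring existence of a minimizer.
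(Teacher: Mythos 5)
Your proof is correct and rests on exactly the same mechanism as the paper's: the monotonicity of the Lyapunov function $L$ associated with an arbitrary comparison point $x^*$, which bounds $f(x_t)-f(x^*)$ by $\|x_0-x^*\|^2/(2t)$. The paper phrases this as a contradiction argument while you argue directly via $\limsup$ and an infimum over $x^*$, but the content is identical, and your handling of the case $\inf f = -\infty$ is also fine.
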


\begin{proof}
Since $\frac{d}{dt} f(x_t) = \langle \nabla f(x_t), \:\dot x_t\rangle = - \|\nabla f(x_t)\|^2$, we find that $f(x_t)$ is monotone decreasing in time. In particular, the limit $\lim_{t\to\infty} f(x_t)$ exists (but may be $-\infty$ if $f$ is not bounded from below).

Assume for the sake of contradiction that $\lim_{t\to \infty} f(x_t) > \inf_{x\in\R^m} f(x)$. Choose $x^*\in \R^m$ such that $f(x^*) < \lim_{t\to\infty} f(x_t)$ and consider the associated function $L$. Then
\[
L(0) \geq L(t) \geq t\,\big(f(x_t)- f(x^*)\big) \geq \frac{\lim_{s\to\infty} f(x_s) - f(x^*)}2\, t 
\]
for all sufficiently large $t$. As the term on the right grows uncontrollably as $t\to\infty$, we have reached a contradiction.
\end{proof}

\subsection{Gradient flows for convex functions with minimizers: Hilbert spaces}\label{section Hilbert}

The assumption that $f$ has a minimizer has profound impact. In particular, if $f(x^*) = \inf_{x\in\R^m}f(x)$, the first term in $L$ is non-negative. We conclude that
\[
f(x_t) - \inf_{x\in\R^m}f(x) \leq \frac{L(t)}t \leq \frac{L(0)}t = \frac{\|x_0 - x^*\|^2}{2t},
\]
i.e.\ the excess objective $f(x_t) - \inf f$ decays at least as fast as $C/t$ for some $C>0$.
The question remains whether the rate of $1/t$ is optimal, and the fact that the energy dissipation $t\|\nabla f(x_t)\|^2$ is large suggests otherwise. We see that this is not the case -- unlike the upper bound $C/t$, the excess objective value $f(x_t) - \inf f$ is in fact {\em integrable} at infinity.

\begin{lemma}\label{lemma integrability gf}
Assume that $f$ is a convex function which has a minimizer $x^*$ and $x$ is a gradient flow of $f$. Then
\[
\int_0^\infty f(x_t) - \inf f\dt \leq \frac{\|x_0 - x^*\|^2}2.
\]
\end{lemma}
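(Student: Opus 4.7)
The plan is to avoid the Lyapunov function $L(t)$ entirely (which only yields the non-integrable bound $C/t$) and instead extract integrability directly from the evolution of the squared distance to the minimizer $x^*$.

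The first step is to compute
\[
\frac{d}{dt}\frac12\|x_t - x^*\|^2 = \langle x_t - x^*, \dot x_t\rangle = -\langle x_t - x^*, \nabla f(x_t)\rangle.
\]
The key input is the first-order convexity inequality $f(x^*) \geq f(x_t) + \langle \nabla f(x_t), x^* - x_t\rangle$, equivalently $\langle x_t - x^*, \nabla f(x_t)\rangle \geq f(x_t) - f(x^*)$. Substituting gives the pointwise differential inequality
\[
\frac{d}{dt}\frac12\|x_t - x^*\|^2 \leq -\bigl(f(x_t) - \inf f\bigr).
\]

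The second step is to integrate this inequality from $0$ to $T$ and rearrange:
\[
\int_0^T \bigl(f(x_t) - \inf f\bigr)\,dt \leq \frac12\|x_0 - x^*\|^2 - \frac12\|x_T - x^*\|^2 \leq \frac{\|x_0 - x^*\|^2}{2}.
\]
Since the integrand is non-negative (as $f(x^*) = \inf f$) and the right-hand side is independent of $T$, the monotone convergence theorem lets us send $T \to \infty$ to conclude
\[
\int_0^\infty \bigl(f(x_t) - \inf f\bigr)\,dt \leq \frac{\|x_0 - x^*\|^2}{2}.
\]

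There isn't really a hard step here: everything reduces to the one-line convexity inequality applied to the distance-squared functional. The only thing worth flagging is the conceptual point that Lemma \ref{lemma monotone integrable decay} combined with this integrability bound immediately upgrades the classical $O(1/t)$ rate to $f(x_t) - \inf f = o(1/t)$, which is presumably why this lemma is positioned here. Also note that smoothness beyond $C^1$ is not required for the argument, and the identical computation will work verbatim in a Hilbert space, which is consistent with the remark in the excerpt that the proofs transfer directly to the infinite-dimensional setting.
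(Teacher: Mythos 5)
Your proof is correct and is essentially identical to the paper's own (first) proof: both differentiate $\tfrac12\|x_t-x^*\|^2$, apply the first-order convexity inequality to get $\tfrac{d}{dt}\tfrac12\|x_t-x^*\|^2 \le -(f(x_t)-\inf f)$, and integrate over $[0,T]$ before letting $T\to\infty$. No gaps.
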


\begin{proof}
Note that 
\[
\frac{d}{dt}\|x_t - x^*\|^2 = 2\,\langle x_t -x ^*, \dot x_t\rangle = 2\,\langle \nabla f(x_t), x^*-x_t\rangle \leq 2\big\{f(x^*) - f(x_t)\big\} \leq 0.
\]
In particular, the function $\|x_t - x^*\|^2$ is monotone decreasing and bounded from below and thus has a limit. We find that
\[
\frac{\|x^* - x_0\|^2}2 \geq \frac{\|x^* - x_0\|^2}2 - \frac{\|x^* - x_T\|^2}2 = \int_0^T f(x_t) - f(x^*)\dt
\]
for all $T>0$. The result follows by taking $T\to\infty$.
\end{proof}

\begin{proof}[Alternative Proof of Lemma \ref{lemma integrability gf}]
    Again, let $L$ be the Lyapunov function associated to a minimizer $x^*$. Then we compute that 
\begin{align*}
\int_0^\infty f(x_t) - \inf f\dt &= - \int_0^\infty \int_t^{\infty} \frac{d}{ds}f(x_s)\ds \dt
	= \int_0^\infty \int_t^{\infty} \|\nabla f(x_s)\|^2\ds \dt\\
	&= \int_0^\infty \int_0^{s} \|\nabla f(x_s)\|^2\dt \ds
	=\int_0^\infty s\,\|\nabla f(x_s)\|^2\ds
	\leq -\int_0^\infty L'(s)\ds
	\leq L(0).
\end{align*}
    It is possible to exchange the order of integration here due to a Theorem of Tonelli, see e.g.\ \cite[Kapitel 8.5]{konigsberger2013analysis}.
\end{proof}

Since $f(x_t) - \inf f$ is additionally decreasing, Lemmas \ref{lemma monotone integrable decay} and \ref{lemma integrability gf} imply the following.

\begin{corollary}\label{corollary faster convergence gf}
Assume that $f$ is a convex function which has a minimizer and $x$ is a gradient flow of $f$. Then
\[
\lim_{t\to\infty}t\cdot\big( f(x_t) - \inf_{x\in\R^m}f(x) \big)=0.
\]
\end{corollary}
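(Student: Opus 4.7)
The proof plan is essentially a one-line assembly of the two preceding results, but it is worth being careful about hypotheses. Set $g(t) := f(x_t) - \inf f$. First I would verify the two inputs needed to invoke Lemma \ref{lemma monotone integrable decay}. Monotonicity of $g$ is immediate from the energy dissipation identity $\frac{d}{dt}f(x_t) = -\|\nabla f(x_t)\|^2 \leq 0$, and $g \geq 0$ since $x^*$ is a minimizer. Integrability of $g$ on $(0,\infty)$ is exactly the content of Lemma \ref{lemma integrability gf}.

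Next I would address the mild technicality that Lemma \ref{lemma monotone integrable decay} was stated for functions taking values in $(0,\infty)$. If there exists a finite time $t_0$ with $g(t_0) = 0$, then $x_{t_0}$ is a minimizer, $\nabla f(x_{t_0}) = 0$, and the flow is stationary thereafter, so $t \cdot g(t) \equiv 0$ for $t \geq t_0$ and the corollary is trivial. Otherwise, $g(t) > 0$ for all $t > 0$, and Lemma \ref{lemma monotone integrable decay} applies directly on, say, $(\eps, \infty)$ for any $\eps > 0$ after a harmless shift, yielding
\[
\lim_{t\to\infty} t\cdot g(t) = \lim_{t\to\infty} t \cdot \bigl(f(x_t) - \inf f\bigr) = 0,
\]
which is the claim.

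There is no genuine obstacle here, as both ingredients have already been established. The only thing to watch is that Lemma \ref{lemma monotone integrable decay} is a statement about the limit, not just an upper bound; combined with the non-negativity of $g$, this gives the $o(1/t)$ decay rather than merely the $O(1/t)$ bound that comes from the Lyapunov function argument.
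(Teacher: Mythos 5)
Your proposal is correct and is exactly the paper's argument: combine the integrability of $g(t)=f(x_t)-\inf f$ from Lemma \ref{lemma integrability gf} with the monotonicity of $t\mapsto f(x_t)$ and apply Lemma \ref{lemma monotone integrable decay}. The extra care you take with the strict-positivity hypothesis (handling the case where the flow hits a minimizer in finite time) is a harmless refinement the paper leaves implicit.
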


Additionally, we can compare $f(x_t)$ to a function which is non-integrable at infinity such as $1/(t\,\log t)$, we immediately obtain the qualitative statement that
\[
\liminf_{t\to\infty}\big(t\cdot \log t\cdot \big(f(x_t) - \inf f\big)\big) = 0.
\]
Stronger statements are available, but harder to formulate \cite{niculescu2011note}.
To illustrate that the improvement from integrability can be made quantitative, we obtain a non-asymptotic risk bound for the optimal iterate in a given range. While uncommon in convex optimization, such `optimal iterate' bounds are the norm in non-convex optimization.

\begin{lemma}
For any $t>1$, we have
\[
\min_{t\leq s\leq t\log t} \big(s\cdot \big(f(x_s) - \inf f\big)\big) \leq \frac{\|x_0-x^*\|^2}{2\,\log(\log t)}.
\]
\end{lemma}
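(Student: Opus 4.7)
The plan is to convert the integrability bound from Lemma~\ref{lemma integrability gf} into a pointwise bound on the quantity $s(f(x_s) - \inf f)$ via a standard pigeonhole/averaging argument over the dyadic-type interval $[t, t\log t]$.

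First, set $g(s) := f(x_s) - \inf f$ and let $M := \min_{t\leq s\leq t\log t} s\cdot g(s)$ (the minimum is attained because $g$ is continuous on this compact interval). By definition of $M$, we have the pointwise lower bound $g(s) \geq M/s$ for every $s\in[t, t\log t]$. Integrating this inequality over the interval yields
\[
\int_t^{t\log t} g(s)\ds \;\geq\; \int_t^{t\log t} \frac{M}{s}\ds \;=\; M\cdot\bigl(\log(t\log t) - \log t\bigr) \;=\; M\cdot \log(\log t).
\]

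Next, I bound the left-hand side by extending the interval of integration to $(0,\infty)$ and invoking Lemma~\ref{lemma integrability gf}, which tells us that
\[
\int_t^{t\log t} g(s)\ds \;\leq\; \int_0^\infty g(s)\ds \;\leq\; \frac{\|x_0 - x^*\|^2}{2}.
\]
Combining the two displays and dividing by $\log(\log t) > 0$ (which is positive precisely because we assumed $t>1$, ensuring $\log t > 1$ after enlarging $t$ if needed; a brief comment may be warranted that the statement is vacuous or trivial for $t$ so close to $1$ that $\log\log t \leq 0$) gives the claimed inequality.

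There is essentially no obstacle here — the whole proof is the one-line computation $\log(t\log t) - \log t = \log\log t$, combined with the integrability estimate already established. The only point that requires a touch of care is ensuring $\log\log t$ is positive so that the division preserves the direction of the inequality; this forces an implicit assumption that $t > e$, which the authors likely gloss over since the bound is only meaningful in that regime anyway.
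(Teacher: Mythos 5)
Your proof is correct and essentially identical to the paper's: both integrate the pointwise bound $f(x_s)-\inf f \geq M/s$ over $[t,\,t\log t]$, use $\int_t^{t\log t} s^{-1}\,\mathrm{d}s = \log\log t$, and control the integral via Lemma~\ref{lemma integrability gf}. Your side remark that the inequality is only meaningful when $\log\log t>0$ (i.e.\ $t>e$) is a fair observation that the paper also glosses over.
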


\begin{proof}
    For simplicity, $\inf f=0$. If we have $f(x_s)\geq \frac{\eps}{s\,\log (\log t)}$ for some $\eps>0$ and $t\leq s \leq t\log t$, then
\[
    \frac{\|x_0-x^*\|^2}2 \geq \int_0^\infty f(x_s)\ds
        \geq \frac{\eps}{\log(\log t)} \int_t^{t\log t} \frac1s\ds
        = \eps.\qedhere
\]
\end{proof}

In other words, $f(x_t) - \inf f$ decays slightly faster than $O(1/t)$ in a way that can be made precise. 
We now demonstrate that the characterization of Lemma \ref{lemma integrability gf} is sharp, at least in infinite-dimensional Hilbert spaces. The existence and regularity of a gradient flow curve in a Hilbert space follows by the Picard-Lindel\"off theorem in the infinite-dimensional case just as it does in the finite-dimensional situation.


Note that by `gradient flows in Hilbert spaces' we refer to gradient flows of continuous convex functionals defined on a Hilbert space. This does {\em not} cover PDEs which arise as gradient flows of convex functionals such as the Dirichlet energy which are only defined on a dense subset. Much greater care must be taken in that context to prove existence and interpret gradients.

\begin{lemma}\label{lemma slow convergence}
Let $H$ be a separable Hilbert space. Then there exists a quadratic convex function such that the following is true: For any monotone decreasing integrable function $g:[0,\infty)\to\R$, there exists a gradient flow solution $u(t)$ of $F$ such that
\[
F(u) = 0 \quad\LRa\quad u=0 \qquad\text{and}\qquad F(u(t)) \geq g(t)\quad\forall\ t\geq 1.
\]
\end{lemma}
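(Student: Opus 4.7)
The plan is to construct a single universal quadratic form $F$ on $H = \ell^2(\N)$, namely
\[
F(u) := \frac{1}{2}\sum_{n\geq 1} \lambda_n\,u_n^2, \qquad \lambda_n := 2^{-n},
\]
and to tune only the initial datum $u(0)$ to the given profile $g$. Positivity of every $\lambda_n$ makes $F$ strictly convex and delivers $F(u) = 0 \Leftrightarrow u = 0$ for free. Since $\nabla F(u) = (\lambda_n u_n)_n$ is a bounded self-adjoint operator on $H$, Picard--Lindel\"of produces a global gradient flow $u_n(t) = u_n(0)\,e^{-\lambda_n t}$, and the risk decouples as
\[
F(u(t)) = \frac{1}{2}\sum_{n\geq 1}\lambda_n\,u_n(0)^2\,e^{-2\lambda_n t}.
\]

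The key idea is to exploit that mode $n$ has characteristic time scale $1/\lambda_n = R_n := 2^n$, and to use it to lower bound $F(u(t))$ on the dyadic block $[R_{n-1}, R_n]$. Given $g$, I would set
\[
a_n := u_n(0)^2 := 2\,e^2\,R_n\,g(R_{n-1}).
\]
For $t \in [R_{n-1}, R_n]$ one has $2\lambda_n t \in [1,2]$, so discarding all terms but the $n$-th,
\[
F(u(t)) \;\geq\; \tfrac{1}{2}\lambda_n\, a_n\,e^{-2\lambda_n t} \;\geq\; \tfrac{1}{2}\lambda_n\, a_n\,e^{-2} \;=\; g(R_{n-1}) \;\geq\; g(t),
\]
the last step by monotonicity of $g$. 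Sweeping $n\geq 1$ yields $F(u(t)) \geq g(t)$ for every $t\geq 1$, as desired.

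The main obstacle, and the only place the integrability hypothesis on $g$ is actually used, is verifying that the chosen initial datum lies in $H$, i.e.\ that $\sum_n a_n < \infty$. For this I would invoke the standard dyadic comparison estimate: monotonicity of $g$ gives
\[
\int_{R_{n-2}}^{R_{n-1}} g(t)\dt \;\geq\; \bigl(R_{n-1}-R_{n-2}\bigr)\,g(R_{n-1}) \;=\; \tfrac{R_{n-1}}{2}\,g(R_{n-1}),
\]
so summing over $n\geq 2$ shows $\sum_{n\geq 2} R_{n-1}\,g(R_{n-1}) \leq 2\int_0^\infty g(t)\dt < \infty$, whence $\sum_n a_n = 4e^2 \sum_n R_{n-1}\,g(R_{n-1}) < \infty$ (the $n=1$ term being a harmless finite constant). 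Combining this summability with the pointwise lower bound above completes the proof. I do not expect any further technical surprises, since both the ODE and all estimates reduce to a one-dimensional calculation once the dyadic ansatz has been fixed; the only conceptual step was to recognize that the integrability of $g$ is equivalent, up to universal constants, to the summability of $\sum_n R_{n-1} g(R_{n-1})$ along the geometric grid $R_n = 2^n$.
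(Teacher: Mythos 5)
Your proof is correct, and while it shares the paper's basic strategy -- fix a positive diagonal quadratic form whose spectrum accumulates at $0$, use the fact that the mode with eigenvalue $\lambda$ relaxes on the time scale $1/\lambda$, and load the initial condition according to $g$ -- the implementation is genuinely different. The paper works on $L^2(1,\infty)$ with $F(u)=\frac12\int_1^\infty u^2(s)/s\ds$, picks the continuous spectral density $u_0(s)^2=-2e^2\,s\,g'(s)$, and checks $u_0\in H$ by integration by parts plus Lemma \ref{lemma monotone integrable decay} to dispose of the boundary term $R\,g(R)$. Your dyadic version replaces all of this by sampling $g$ at $R_n=2^n$ and the one-line block comparison $\sum_{n\ge 2}R_{n-1}g(R_{n-1})\le 2\int_1^\infty g(t)\dt$, which I have checked along with the pointwise bound ($2\lambda_n t\in[1,2]$ on $[R_{n-1},R_n]$, so the single $n$-th mode already gives $F(u(t))\ge g(R_{n-1})\ge g(t)$). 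Two things your route buys: it never differentiates $g$, so it applies verbatim to monotone decreasing integrable $g$ that are not absolutely continuous (the paper's identity $g(t)=\frac1{2e^2}\int_t^\infty u_0^2(s)/s\ds$ implicitly assumes $g$ can be recovered from $g'$), and the membership $u(0)\in H$ is verified more elementarily. What the paper's continuum bookkeeping buys is an exact match of the lower bound to $g$ rather than a match on a dyadic grid, and that exact form is what gets recycled in the heavy-ball analogue, Lemma \ref{lemma nesterov lower bound}. One pedantic point worth recording in either version: monotone decreasing plus integrable forces $g\ge 0$, which is what makes $u_n(0)=\sqrt{a_n}$ (respectively $u_0(s)=\sqrt{-2e^2 s g'(s)}$) well defined.
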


\begin{proof}
Without loss of generality, we may consider $H = L^2(1,\infty)$ by isometry. Consider
\[
F:H\to \R, \qquad F(u) = \frac12\int_1^\infty \frac{u^2(s)}s \ds.
\]
Since $F$ is a continuous quadratic form, it is Frechet differentiable with gradient $\big(\nabla F(u)\big)(s) = \frac{u(s)}s$.
The gradient flow of $F$ acts pointwise in $s$: $u(t,s) = e^{-t/s}u_0(s)$, so if $t\geq 1$, then
\[
F(u(t)) = \frac12 \int_1^\infty\frac{u_0^2(s)}s \,e^{-2t/s}\ds \geq 
	\frac12 \int_t^\infty\frac{u_0^2(s)}s \,e^{-2t/s}\ds \geq \frac1{2e^2} \int_t^\infty\frac{u_0^2(s)}s \ds.
\]
In order to identify the lower bound, we require
\[
g(t) = \frac1{2e^2} \int_t^\infty \frac{u_0^2(s)}s\ds \qquad\LRa\quad g'(t) = - \frac{u_0^2(t)}{2e^2\,t}\text{ and } \lim_{t\to \infty} g(t) = 0.
\]
We therefore select $u_0(s):= \sqrt{-2e^2s\,g'(s)}$ and verify that 
\[
\frac1{2e^2}\int_1^R u_0^2(s)\ds = -\int_1^R s\,g'(s)\ds = g(1) - Rg(R) + \int_1^R g(s)\ds.
\]
Since $g$ is integrable and monotone decreasing, we have $\limsup_{R\to\infty}Rg(R) = 0$ by Lemma \ref{lemma monotone integrable decay}.
Thus
\[
\frac1{e^2}\int_1^R u_0^2(s)\ds = g(1) + \int_1^R g(s)\ds < \infty.
\]
In particular, $u_0\in H$ is a valid initial condition. 
\end{proof}

Notably, the convergence of $t\big(f(x_t)\to \inf f\big)$ to zero can be arbitrarily slow, even for a fixed quadratic functional on an infinite-dimensional Hilbert space, depending only on the initial condition. This quadratic form is `infinitely flat' by its minimum: If $\phi:(0,\infty)\to(0,\infty)$ is any monotone increasing function such that $\lim_{r\to0} \phi(r) = 0$, then there exists a sequence $u_n\in H$ such that
\[
\lim_{n\to \infty} u_n = 0, \qquad \lim_{n\to \infty}\frac{F(u_n)}{\phi(\|u_n\|)} = 0.
\]
In our example,
such a sequence is given for example by
\[
u_n = \frac1n \,1_{\{R_n < s < 1+R_n\}}, \qquad R_n = \frac1{\phi(1/n)}\qquad\text{since } \|u_n\|_{L^2} = 1/n, \qquad F(u_n)\leq \frac1{n\,R_n}.
\]


\subsection{Gradient flows for convex functions with minimizers: Real line}

We can analyze the one-dimensional case more directly. Let $x_t$ be a gradient flow curve for a $C^2$-smooth convex function $f:\R^m\to\R$. Define $g(t) = f(x_t)$. Then 
\[
g'(t) = - \big\|\nabla f(x_t)\big\|^2, \qquad
g''(t) = -\frac{d}{dt} \big\|\nabla f(x_t)\big\|^2 = - 2\,\nabla f\cdot (D^2f)\dot x = 2\,\nabla f\cdot (D^2f)\nabla f  \geq 0,
\]
i.e.\ $g$ is $C^2$-smooth, strictly monotone decreasing and convex. 
Focusing on the one-dimensional case, the gradient flow has a limit since $x_t$ is either monotone increasing or decreasing. To see this, note that $\dot x_t = -f'(x_t)$ cannot change sign along the gradient flow without passing through a minimizer, at which point the trajectory stops moving.
More generally, the gradient flow curves of a convex function (with minimizers) on a finite-dimensional space have finite length \cite{convex_finite_length, gupta2021path}, so we find that 
\[
 \int_0^\infty \sqrt{- g'(t)}\dt = \int_0^\infty \|\nabla f(x_t)\|\dt = \int_0^\infty \|\dot x_t\|\dt < \infty.
\]
We see that this in fact characterizes the energy decay in gradient flows completely.

\begin{lemma}\label{lemma convex decay 1d}
Let $g:[0,\infty)\to [0,\infty)$ a monotone decreasing convex $C^2$-function such that 
\[ 
|g'(0)| + \int_0^\infty \sqrt{-g'(t)}\dt < +\infty.
\]
Then there exist
\begin{enumerate}
\item a convex function $\phi:\R\to[0,\infty)$ such that $\phi(x) = 0$ if and only if $x=0$ and
\item a gradient flow $x_t$ of $\phi$ such that $f(x_t) = g(t)$ for all $t\in\R$.
\end{enumerate}
\end{lemma}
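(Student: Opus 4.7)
The plan is to reverse-engineer $\phi$ and the trajectory from the identity $\phi(x_t)=g(t)$. If $x_t$ satisfies the gradient flow equation $\dot x_t = -\phi'(x_t)$ together with $\phi(x_t)=g(t)$, then differentiating forces $g'(t) = -(\phi'(x_t))^2$, so $\phi'(x_t) = \sqrt{-g'(t)}$ and $\dot x_t = -\sqrt{-g'(t)}$ (assuming $x_t$ decreases monotonically toward $0$). The integrability hypothesis $\int_0^\infty \sqrt{-g'(t)}\dt<\infty$ is precisely what allows this ODE to be integrated backwards from $t=\infty$ to obtain a trajectory of finite length terminating at the origin.

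Concretely, I set
\[
x_t := \int_t^\infty \sqrt{-g'(s)}\ds\qquad\text{for } t\geq 0,
\]
which is $C^1$, monotonically decreasing from $x_0=\int_0^\infty\sqrt{-g'(s)}\ds$ to $0$ as $t\to\infty$, and satisfies $\dot x_t=-\sqrt{-g'(t)}$. Under the generic assumption $g'<0$ (the case where $g$ becomes constant on a ray is trivial), $t\mapsto x_t$ is a $C^1$ homeomorphism of $[0,\infty)$ onto $(0,x_0]$; let $\tau$ denote its inverse. I then define
\[
\phi(x):=g(\tau(x))\text{ for }x\in(0,x_0],\qquad \phi(0):=\lim_{t\to\infty}g(t),
\]
where the latter limit is $0$ under the implicit assumption that $g\to 0$. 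The chain rule yields $\phi'(x)=g'(\tau(x))/\dot x_{\tau(x)}=\sqrt{-g'(\tau(x))}$, which is monotone non-decreasing in $x$ since $g$ convex implies $-g'$ is monotone non-increasing in $t$ and $\tau$ is monotone decreasing in $x$. This delivers convexity of $\phi$ on $[0,x_0]$ together with the boundary slopes $\phi'(0)=0$ and $\phi'(x_0)=\sqrt{-g'(0)}$.

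Finally, I extend $\phi$ to all of $\R$ by reflecting across the origin on $[-x_0,x_0]$ and by any convex tangent extrapolation on $|x|>x_0$ (for instance, a quadratic tail with matching slope $\sqrt{-g'(0)}$); the reflection is $C^1$ at the origin precisely because $\phi'(0)=0$. By construction the trajectory $x_t$ above solves $\dot x_t=-\phi'(x_t)$ with $\phi(x_t)=g(t)$. The main obstacle is not the construction itself but ensuring the compatibility conditions line up: that $\phi'(0)=0$, which needs $\lim_{t\to\infty}g'(t)=0$ (automatic for a convex monotone decreasing function with finite limit), that the symmetric extension really is $C^1$ at the origin, and that the degenerate case where $g'$ vanishes on an interval is addressed separately. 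A secondary issue, which should be flagged, is that the lemma implicitly requires $\lim_{t\to\infty}g(t)=0$, since otherwise $\phi$ could not simultaneously satisfy $\phi(x_t)=g(t)$ and have its unique zero at $x=0$.
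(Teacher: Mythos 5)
Your proposal is correct and follows essentially the same route as the paper: define the trajectory $x_t=\Psi(t)=\int_t^\infty\sqrt{-g'(s)}\,\d s$, set $\phi=g\circ\Psi^{-1}$ so that $\phi'(x)=\sqrt{-g'(\tau(x))}$ is monotone (hence $\phi$ convex, with $\phi'(0)=0$ guaranteeing a $C^1$ convex extension), and verify $\dot x_t=-\phi'(x_t)$ and $\phi(x_t)=g(t)$ directly. The only cosmetic differences are that the paper checks convexity by computing $\phi''=-g''/(2g')\ge 0$ rather than by monotonicity of $\phi'$, and extends by $x^2$ on $x<0$ rather than by reflection; your flagged caveats (the implicit normalization $\lim_{t\to\infty}g(t)=0$ and the degenerate case $g'\equiv 0$ on a ray) are exactly the ones the paper also records.
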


We note that $\lim_{t\to\infty}g(t)$ exists since $g$ is monotone decreasing. Without loss of generality, we assume that $\lim_{t\to\infty}g(t)=0$. We show that the conditions of Lemma \ref{lemma convex decay 1d} recover two previous characterizations, at least in part.

\begin{enumerate}
\item First, we note that for a convex decreasing function $g$ we have
    \begin{align*}
        g(t) &= \int_t^\infty - g'(s)\ds \leq \sqrt{-g'(t)}\int_t^\infty \sqrt{-g'(s)}\ds \leq \frac1t \int_0^t\sqrt{-g'(s)}\ds\cdot \int_t^\infty \sqrt{-g'(s)}\ds\\
        &\leq \frac{\left(\int_0^\infty \sqrt{-g'(s)}\ds\right)^2}{4t}.
    \end{align*}
    In the setting of gradient flows, $\int_0^\infty \sqrt{-g'(t)}\dt = \int_0^\infty \|\nabla f(x_t)\|\dt$ is the length of the gradient flow curve. In particular, this replaces the estimate
    \[
    f(x_t) - \inf f\leq \frac{\|x_0-x^*\|^2}{2t}
    \]
    with a better constant $1/4$ in place of $1/2$, but with the length of the trajectory rather than the Euclidean distance of its endpoints. This can vastly overestimate the true constant, but without access to the original geometry, it is a valid replacement. In one dimension, it is a strict improvement.
    
    \item Next, we show that $\int_0^\infty g(t)\dt<\infty$. Namely, since $\sqrt{-g'}$ is monotone decreasing, we have
    \[
    0\leq g(t) = -\int_t^\infty g'(s)\ds \leq \sqrt{-g'(t)} \int_t^\infty \sqrt{-g'(s)}\ds \leq \sqrt{-g'(t)}\left(\int_0^\infty\sqrt{-g'(s)}\ds\right),
    \]
    so  $g \in L^1(0,\infty)$.
    \end{enumerate}
    
\begin{proof}[Proof of Lemma \ref{lemma convex decay 1d}]
{\bf Setup.}
Since $g$ is monotone, convex and integrable, we note that $g'(t)<0$ or $g(s)\equiv 0$ for all $s\geq t$. The second case is a simpler variation, so we may assume that  $g'(t)\neq 0$ for any $t\in(0,\infty)$.
Thus the map
\[
\Psi :[0,\infty)\to [0,\infty), \qquad \Psi(t) =  \int_t^\infty \sqrt{-g'(s)}\ds 
\]
is $C^1$-smooth and strictly monotone decreasing. In particular, we can define $\phi$ on the interval $\left[0, X\right]$ by
\[
\phi\left(\int_t^\infty \sqrt{-g'(s)}\ds\right) = g\left( t \right)\qquad\text{where}\quad X = \int_t^\infty \sqrt{-g'(s)}\ds.
\]
It is easy to extend $\phi$ in a $C^1$-fashion as as $\phi(x) = x^2$ for $x<0$ and $\phi(x) = \phi(X) + \phi'(X)\,(x-X)$ for $x>X$. For the finiteness of derivatives, see the next step.

{\bf Convexity of $\phi$.}
We can easily compute the derivatives as
\begin{align*}
g'(t) &= -\phi'\left(\int_t^\infty \sqrt{-g'(s)}\ds\right) \,\sqrt{-g'(t)}.
\end{align*}
Note in particular that $\phi'(X) = \sqrt{-g'(0)}$ and $\phi'(0) = \lim_{t\to\infty}\sqrt{-g'(t)} = 0$, since $\sqrt{-g'}$ is integrable and monotone decreasing (since $g$ is convex). We compute further that 
\begin{align*}
g''(t) &= \phi''\left(\int_t^\infty \sqrt{-g'(s)}\ds\right) \,\sqrt{-g'(t)}^2 - \phi'\left(\int_0^t \sqrt{-g'(s)}\ds\right) \,\frac{d}{dt}\sqrt{-g'(t)}\\
	&= - \phi''\,g' + \phi'\,\frac{g''}{2\sqrt{-g'}}\\
	&= - \phi''\,g' - {\sqrt{-g'}}\,\frac{g''}{2\sqrt{-g'}}\\
	&= - \phi''\,g' + g''/2,
\end{align*}
so
\[
\phi''\left(\int_0^t \sqrt{-g'(s)}\ds\right) = -\frac{g''}{2g'} \geq 0.
\]
If $g$ is (strictly) monotone decreasing and convex, we see that $\phi$ is convex as well.

{\bf Gradient flow of $\phi$.} Consider the gradient flow $s_t$ of $\phi$, i.e.\ the solution of the ODE 
\[
\dot s_t = - \phi'(s_t) = - \sqrt{-g'(\Psi^{-1}(s_t))}.
\]
Note that $S_t:= \Psi(t)$ satisfies
\[
\dot S_t = \Psi'(t) = - \sqrt{-g'(t)} = - \sqrt{-g' \big(\Psi^{-1}\big(\Psi(t)\big)\big)} = - \sqrt{-g'\big(\Psi^{-1}(S_t)\big)},
\]
i.e.\ $s_t$ and $S_t$ solve the same differential equation. We conclude that $s_t = \Psi^{-1}(t)$. 
 As usual, we have
\[
\frac{d}{dt} \phi(s_t) = \phi'(s_t)\,\dot s_t =  \big(\phi'(s_t)\big)^2 = g'\big(\Psi^{-1}(s_t)\big) = -g'(t).
\]
Again, we conclude that $\phi(s_t) = g(t)$.
\end{proof}

Based on Lemmas \ref{lemma slow convergence} and \ref{lemma convex decay 1d}, we demonstrate that there is a fundamental difference between the gradient flows of convex functions in finite-dimensional and infinite-dimensional Hilbert spaces. More precisely, while the integrability of $\sqrt{-g'}$ implies the integrability of $g$, the two are not equivalent:

Consider the function $g_\alpha(t) = \frac1{t\,(\log t)^\alpha}$. Then
\[
\int_2^\infty g_\alpha(t) = \frac{\log(t)^{1-\alpha}}{1-\alpha} \bigg|_{t=2}^{t\to\infty} = \frac{(\log 2)^{1-\alpha}}{1-\alpha} < +\infty,
\]
for $\alpha>1$ but 
\[
g_\alpha'(t) = - \left(\frac{n}{x^2\,(\log x)^{1+\alpha}} + \frac{1}{x^2\,(\log x)^\alpha}\right) \leq - \frac{1}{x^2\,(\log x)^\alpha}.
\]
In particular
\[
\int_2^\infty \sqrt{-g_\alpha'(t)}\dt \geq \int_2^\infty \frac1{t\,(\log t)^{\alpha/2}}\dt = +\infty
\]
if $\alpha\leq 2$ since
\[
\int_2^\infty \frac1{t\,\log t}\dt = \log(\log t)\bigg|_{t=2}^{t=\infty} = + \infty
\]
and $\log$ is monotone increasing, $\alpha/2>0$. Thus $g_\alpha$ is not the decay function for the gradient flow of a convex function in finite dimension for $\alpha\in(1,2]$. We want to conclude that
\begin{enumerate}
    \item $\liminf_{t\to \infty} \big(t\,(\log t)^2\cdot f(x_t)\big) =0$ for the gradient flow of a convex function in finite dimension.
    \item There is a qualitatively different condition on the decay rate in finite dimension compared to Hilbert spaces.
\end{enumerate}
The question is: Is there a convex function $\tilde g_\alpha$ such that
\[
\liminf_{t\to \infty}\frac{\tilde g_\alpha(t)} {g_\alpha(t)}\geq 1, \qquad \lim_{t\to\infty}\tilde g_\alpha(t) = 0\qquad\text{and }\int_2^\infty \sqrt{-\tilde g_\alpha'(t)}\dt < \infty?
\]
If such a function $\tilde g_\alpha$ exists, then $g_\alpha$ itself may not arise as the decay curve of a gradient flow, but it does not serve as an lower barrier, even asymptotically. We prove that this is not possible in Appendix \ref{apppendix l12 derivative}. Namely, we prove the following auxiliary statement.

\begin{restatable*}{lemma}{lemmaa}\label{square-root-integral-lower-bound}
Let $g, G:[0,\infty) \to [0,\infty)$ be decreasing, differentiable convex functions such that
\[
\lim_{t\to\infty} G(t) = \lim_{t\to\infty} g(t) = 0, \qquad \liminf_{t\to\infty} \frac{G(t)}{g(t)} > 0.
\]
Then 
\[
\int_1^\infty \sqrt{-g'(t)}\dt = +\infty \qquad\Ra \qquad  \int_1^\infty \sqrt{-G'(t)}\dt = +\infty.
\]
\end{restatable*}

To understand Lemma \ref{square-root-integral-lower-bound}, consider a simpler task first: Minimize $\int_0^\infty \sqrt{|G'(t)|}\dt$ in the class of functions $G$ such that $G(0) = 1$ and $\lim_{t\to\infty}G(t) = 0$. This problem is not well-posed as the function $G_r(t) =\max\{1-rt, 0\}$ achieves
\[
\int_0^\infty \sqrt{|G_r'(t)|}\dt = \int_0^{1/r}\sqrt{r}\dt = \frac1{\sqrt r}, \qquad r>0.
\]
As $r\to \infty$, the integral approaches zero, i.e.\ the energy infimum is $0$ and is not attained. This is due to the fact that for the square root of the derivative, short and steep segments are heavily discounted. 

The function $G_r$ is convex for all $r>0$. An extension of the argument above could be used to construct $G\geq g$ such that $\|G'\|_{L^{1/2}(0,\infty)}$ is arbitrarily small by introducing many short, steep segments and keepoing $G$ mostly constant away from these fast transitions. However, in combination, the constraints that $G$ must be convex and (a version of) $G\geq g$ induce a non-trivial competition: $G$ should be as steep as possible, since large derivatives on short segments are heavily discounted. However, it cannot concentrate steep segments in many places since its derivative is a monotone function.

The proof of Lemma \ref{square-root-integral-lower-bound} is the most technically challenging part of the article. It primarily uses the concavity of the function $z\mapsto \sqrt{z}$ and the statement remains valid for more general concave functions. We postpone the proof to the Appendix in order to focus on the application to gradient flows for now.

In particular, we have shown the following.

\begin{corollary}\label{corollary log2 decay}
    Let $f:\R\to\R$ be a convex $C^2$-function and $x^*\in\R$ such that $f(x^*) = \inf f$. Then it is not possible that $f(x_t) \geq \frac \eps{t\,\log^2t}$ for a fixed $\eps>0$ and all large $t$.
\end{corollary}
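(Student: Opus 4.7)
The plan is to derive a contradiction by combining the finite-length property of one-dimensional gradient flow trajectories of convex functions with minimizers and the technical Lemma \ref{square-root-integral-lower-bound}. First I would set $g(t) := f(x_t) - \inf f$ and recall from the preceding one-dimensional discussion that $g$ is $C^2$, nonnegative, monotone decreasing, and convex, with $g(t) \to 0$ (since $f$ attains its infimum, by the corollary of consistency). Without loss of generality $g(t) > 0$ for all $t$: if $g$ vanishes at some finite time, the flow has already reached a minimizer and is stationary thereafter, so the hypothesized lower bound $\eps/(t\log^2 t)$ would fail trivially for large $t$.

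Next I would invoke the length bound. In one dimension, $\dot x_t = -f'(x_t)$ cannot change sign without passing through a minimizer where the flow halts, so $x_t$ is monotone and converges to $x^*$. Hence
\[
\int_0^\infty \sqrt{-g'(t)}\dt = \int_0^\infty |f'(x_t)|\dt = \int_0^\infty |\dot x_t|\dt = |x_0 - x^*| < \infty,
\]
which gives the key finiteness $g' \in L^{1/2}(0,\infty)$.

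For the contradiction, I would suppose there exist $\eps, T>0$ with $g(t) \geq \eps/(t\log^2 t)$ for all $t\geq T$, and compare $g$ against the barrier $g_2(t) := 1/(t\log^2 t)$. The computation immediately preceding the corollary already shows that $g_2$ is decreasing, convex near infinity, satisfies $g_2(t)\to 0$, and crucially has $\int_2^\infty \sqrt{-g_2'(t)}\dt = +\infty$. Since $\liminf_{t\to\infty} g(t)/g_2(t) \geq \eps > 0$ by hypothesis, Lemma \ref{square-root-integral-lower-bound} (applied with its $g$ taken to be $g_2$ and its $G$ taken to be our $g$, after an irrelevant shift so both are defined on $[1,\infty)$) yields $\int_1^\infty \sqrt{-g'(t)}\dt = +\infty$, contradicting the length bound of the previous paragraph.

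The reduction itself is short; the substantive obstacle is entirely contained in Lemma \ref{square-root-integral-lower-bound}, which encodes the tension between a convex monotone minorant and smallness of $\|G'\|_{L^{1/2}}$. Once that lemma is available in the appendix, the present corollary is a two-step consequence: finite length $\Rightarrow$ $\sqrt{-g'} \in L^1$, while the supposed lower bound $g \gtrsim 1/(t\log^2 t)$ forces $\sqrt{-g'} \notin L^1$.
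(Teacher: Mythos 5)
Your proof is correct and follows essentially the same route as the paper: it uses the finite path length of one-dimensional gradient flow trajectories to get $\sqrt{-g'}\in L^1$, verifies that the barrier $1/(t\log^2 t)$ has non-integrable $\sqrt{-g_2'}$, and invokes Lemma \ref{square-root-integral-lower-bound} to reach the contradiction — which is exactly the chain of reasoning the paper lays out immediately before stating the corollary. (The only nitpick: the flow converges to \emph{some} minimizer, not necessarily the given $x^*$, but the length $\int_0^\infty |\dot x_t|\,\mathrm{d}t$ is finite either way.)
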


We deduce that
\begin{equation}\label{eq liminf in 1d is better}
\liminf_{t\to\infty} \big(t\,\log^2t \cdot \big(f(x_t) - \inf f\big)\big) = 0.
\end{equation}
We note however that a substantial improvement over the rate $O(1/t)$ is not possible, and in fact our argument shows that $f(x_t) - \inf f$ may satisfy 
\[
\limsup_{t\to\infty} \big(t\cdot \log^\alpha(t) \cdot \big(f(x_t)-\inf f\big)\big) = +\infty 
\]
for any $\alpha>2$, even in one dimension.
It is tempting, but unfortunately incorrect to assume that the lower limit in \eqref{eq liminf in 1d is better} could be replaced by an proper limit for functions which satisfy the hypothesis that $\int_0^\infty \sqrt{-g'(t)}\dt < +\infty$. We extend Example \ref{example no quantitative improvement} to this scenario and show that no stronger version of Lemma \ref{lemma monotone integrable decay} can be achieved, even under the stronger condition of finite path-length.

\begin{example}\label{example no quantitative improvement l12}
    Let $\phi:(0,\infty)\to(0,\infty)$ be a monotone increasing function such that $\lim_{t\to\infty}\phi(t) = +\infty$. We will show that there exists a convex function $g:(0,\infty)\to(0,\infty)$ such that
    \[
    \lim_{t\to\infty} g(t) = 0, \qquad \int_0^\infty\sqrt{-g'(t)}\dt < +\infty, \qquad \limsup_{t\to\infty} \big(t\cdot\phi(t) \cdot g(t) \big) = +\infty.
    \]
    Let $R_n$ be a sequence such that $\sum_{n=1}^\infty \frac1{\sqrt[3]{\phi(R_n)}}<\infty$. Define 
    \[
    \sqrt{-g'(t)} = \sum_{n=1}^\infty \frac1{R_n\,\sqrt[3]{\phi(R_n)}} \,1_{(0,2R_n]}(t).
    \]
    Then 
    \begin{enumerate}
        \item $\sqrt{-g'}$ is monotone decreasing, i.e.\ $g'$ is monotone increasing, i.e.\ $g$ is convex. 
        \item $\sqrt{-g'}$ is integrable by the same argument as in Example \ref{example no quantitative improvement}.
    \end{enumerate}
    We note that 
    \begin{align*}
    g(t) &= \int_t^\infty - g'(t) \dt \geq \int_t^\infty \sum_{n=1}^\infty \frac{1}{R_n^2\,\big(\phi(R_n)\big)^{2/3}}\,1_{(0,2R_n]}(t)\dt 
    \end{align*}
    Then in particular
    \begin{align*}
    \limsup_{t\to\infty} (t\cdot \phi(t)\cdot g(t)\big) &\geq \limsup_{n\to\infty} \left(R_n\cdot \phi\left(R_n\right) \cdot g(R_n)\right) \\
    &\geq \limsup_{n\to\infty} R_n\cdot \phi(R_n)\int_{R_n}^{2R_n} \frac{1}{R_n^2\,\big(\phi(R_n)\big)^{2/3}}\,1_{(0,2R_n]}(t)\dt\\
        &= \limsup_{n\to\infty} \left(R_n \cdot \phi(R_n) \cdot \frac{1}{R_n\,\big(\phi(R_n)\big)^{2/3}}\right)\\
        &= \lim_{n\to\infty} \phi(R_n)^{1/3} = +\infty.
    \end{align*}
    Again, it is easy to generalize the example to a version where $g$ is infinitely smooth.
\end{example}

\subsection{Gradient flows for convex functions with minmizers: Finite dimension}

In this section, we show that there is no  difference between the decay rates which can be guaranteed for convex functions on finite-dimensional spaces and convex functions on the real line. We exploit two facts: 
\begin{enumerate} 
\item Only the geometry of the objective function along the gradient direction matters to the gradient flow, i.e.\ we can consider the objective function only along the curve traced by the gradient flow itself (in a suitable reparametrization). 

\item A gradient flow curve in a finite-dimensional space always has finite length \cite{convex_finite_length}. More generally, gradient flow curves are completely characterized by the `self-contracting' property that
\[
t_1< t_2<t_3 \quad\Ra\quad \|\gamma(t_2) - \gamma(t_3)\| \leq \|\gamma(t_1) - \gamma(t_3)\|
\]
in the Euclidean norm \cite{daniilidis2010asymptotic, durand2019self}. Self-contracting curves were shown to have finite length in fairly general circumstances \cite{stepanov2017self}.
\end{enumerate}
We note, however, that even gradient flows in two dimensions can be surprisingly complicated. In \cite{daniilidis2010asymptotic}, the authors construct a gradient flow which winds around the unique minimizer of a convex function infinitely often. Such a construction is even possible for a function which is analytic except at the minimizer \cite{daniilidis2022convex}.

We note that these results do not hold in infinite-dimensional Hilbert spaces. In \cite{baillon1978exemple}, the authors construct the gradient flow of a convex function on a Hilbert space which does not converge to a minimizer in the norm topology. In particular, as it does not converge to a limit, the gradient flow curve has infinite length.

\begin{lemma}
Assume that $H$ is a Hilbert space, $f:H\to\R$ is a convex function with a Lipschitz-continuous gradient and $x_t$ is a gradient flow of $f$. If $x$ has finite length, then there exist
\begin{enumerate}
    \item convex $C^1$-function $g:\R\to\R$ which has a minimizer and 
    \item a gradient flow $s_t$ of $g$ such that
\end{enumerate}
\[
f(x_t) = g(s_t) \qquad \forall\ t>0.
\]
\end{lemma}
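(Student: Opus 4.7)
The plan is to reduce the statement to the one-dimensional characterization already obtained in Lemma \ref{lemma convex decay 1d}. First I would introduce the energy decay curve
\[
h(t) := f(x_t) - \inf f, \qquad t \geq 0,
\]
and show that $h$ satisfies the hypotheses of Lemma \ref{lemma convex decay 1d}. That lemma will then supply a convex $\phi:\R \to [0,\infty)$ with $\phi(s) = 0 \iff s = 0$ and a gradient flow $s_t$ of $\phi$ such that $\phi(s_t) = h(t)$. The desired function is then $g := \phi + \inf f$: adding a constant leaves the gradient flow invariant, so $s_t$ is also a gradient flow of $g$, and one has $g(s_t) = \phi(s_t) + \inf f = h(t) + \inf f = f(x_t)$, as required. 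The uniqueness of the zero of $\phi$ endows $g$ with a (unique) minimizer at $s=0$, of value $\inf f$.

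The second step is to verify the hypotheses on $h$. Since $h'(t) = -\|\nabla f(x_t)\|^2$, the function $h$ is $C^1$, non-negative, monotone decreasing, with $|h'(0)| = \|\nabla f(x_0)\|^2 < \infty$, and $h(t) \to 0$ by the consistency corollary proved earlier. Convexity of $h$ is equivalent to $\|\nabla f(x_t)\|$ being non-increasing in $t$; this I would deduce from the standard non-expansivity of gradient flows of convex functions in a Hilbert space (applied to the pair $x_t$ and $x_{t+\tau}$, then dividing by $\tau$ and letting $\tau \downarrow 0$), which requires only that $\nabla f$ be monotone and not that $f \in C^2$. Finally,
\[
\int_0^\infty \sqrt{-h'(t)}\dt \;=\; \int_0^\infty \|\nabla f(x_t)\|\dt \;=\; \int_0^\infty \|\dot x_t\|\dt \;=\; \mathrm{length}(x) \;<\; +\infty
\]
by the finite-length assumption.

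The main technical obstacle is a mild regularity mismatch: Lemma \ref{lemma convex decay 1d} is phrased for $C^2$ functions, whereas $h$ is only guaranteed to be $C^1$ when $f$ has merely Lipschitz gradient. I expect this to be handled in one of two ways. The direct route is to observe that in the construction of Lemma \ref{lemma convex decay 1d}, the candidate $\phi$ is defined implicitly by $\phi \circ \Psi = h$ with $\Psi(t) = \int_t^\infty \sqrt{-h'(s)}\ds$ and carries derivative $\phi'(X) = \sqrt{-h'(\Psi^{-1}(X))}$; convexity of $\phi$ then follows directly from the monotonicity of $\sqrt{-h'}$ (which is equivalent to convexity of $h$) together with the order-reversing nature of $\Psi^{-1}$, with no recourse to $h''$. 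Alternatively, one can regularize $f$ by convolution $f_\eps := f * \rho_\eps$, apply the lemma to each $h_\eps(t) = f_\eps(x^\eps_t) - \inf f_\eps$, and pass to the limit using continuous dependence of gradient flows on Lipschitz data together with uniform length bounds. Either route produces a $C^1$ convex $g$ with a minimizer, and the identity $g(s_t) = f(x_t)$ then follows as above.
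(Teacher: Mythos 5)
Your proposal is correct, and it reaches the conclusion by a more modular route than the paper. The paper works directly with the arclength-normalized flow $\dot z = -\nabla f(z)/\|\nabla f(z)\|$, sets $g(s) = f(z(s))$ on $[0,R]$ (with $R$ the total length), verifies convexity of $g$ by computing $g''(s) = 2\,\tfrac{\nabla f}{\|\nabla f\|}\cdot D^2 f\cdot \tfrac{\nabla f}{\|\nabla f\|}\ge 0$, extends $g$ to $\R$ by a constant on the right and a linear function on the left, and checks that the gradient flow of $g$ reproduces $f(x_t)$. You instead extract the scalar decay curve $h(t) = f(x_t)-\inf f$, verify the hypotheses of Lemma \ref{lemma convex decay 1d} (monotonicity, convexity, $\int_0^\infty\sqrt{-h'} = \mathrm{length}(x)<\infty$), and invoke that lemma; since its construction is itself an arclength reparametrization ($\phi\circ\Psi = h$ with $\Psi$ the remaining length), the function you obtain is essentially the paper's $g$ up to reflection of the arclength parameter. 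Two points in your favor: (i) your reduction makes the logical dependence on Lemma \ref{lemma convex decay 1d} explicit rather than re-deriving its computation, and (ii) your derivation of the convexity of $h$ from non-expansivity of the flow map (so that $t\mapsto\|\nabla f(x_t)\|$ is non-increasing) genuinely only uses monotonicity of $\nabla f$, whereas the paper's Hessian computation formally requires $f\in C^2$ even though the lemma only assumes a Lipschitz gradient — so you actually patch a regularity gap the paper leaves open. Your observation that the $C^2$ hypothesis in Lemma \ref{lemma convex decay 1d} is inessential is also accurate: the convexity of $\phi$ there follows from $\phi'(\Psi(t)) = \sqrt{-h'(t)}$ together with the monotonicity of $\sqrt{-h'}$ and the order reversal of $\Psi$, with no recourse to $h''$. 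The only degenerate case to flag is $\nabla f(x_{t_0})=0$ for some finite $t_0$, where $h\equiv 0$ on $[t_0,\infty)$ and the construction trivializes, exactly as in the cited lemma. The paper's direct construction buys a more transparent geometric picture ($g$ is literally $f$ restricted to the unit-speed trajectory); yours buys economy and slightly weaker regularity requirements.
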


In particular, the statement applies to all gradient flow lines in finite-dimensional Hilbert spaces. The Lemma remains true without the finite length assumption, but becomes somewhat less instructive as $g$ does not have a minimizer in this case.

\begin{proof}
{\bf Setup.} Let us compare two curves: The solution of the gradient flow equation $\dot x_t = -\nabla f(x_t)$ or the time-normalized gradient flow $\dot z(t) = - \frac{\nabla f(z(t))}{\|\nabla f(z(t))\|}$. Then
\[
x_t = z(\phi(t)) \qquad\text{where}\quad \phi(t) = \int_0^t \|\nabla f(x_s)\|\ds
\]
To see this, consider $y(s) = x(\phi^{-1}(s))$ and note that
\[
\frac{d}{ds} y(s) = \dot x(\phi(s))\,\big(\phi^{-1}\big)'(s) = -\nabla f\big(x(\phi^{-1}(s))\big) \,\frac{1}{\phi'(\phi^{-1}(s))} = \frac{-\nabla f(x(\phi^{-1}(s)))}{\| \nabla f(x(\phi^{-1}(s))) \|} = - \frac{\nabla f (y(s))}{\|\nabla f(y(s))\|}.
\]
Since $y$ and $z$ solve the same ODE and $\nabla f/\|\nabla f\|$ is locally Lipschitz-continuous on the set where $\nabla f\neq 0$, we find by the uniqueness contribution of the Picard-Lindel\"off theorem that $y\equiv z$. 

Recall that the assumption of finite length means that 
\[
\lim_{t\to\infty} \int_0^t \|\dot x_s\|\ds = \lim_{t\to\infty} \int_0^t \|\nabla f(x_s)\|\ds = \lim_{t\to\infty} \phi(t) < \infty.
\]
We denote $R:= \lim_{t\to\infty}\phi(t)$.

{\bf Step 1.} In this step, we construct $g:[0,R]\to \R$ as $g(s) = f(z(s))$. Then
\begin{align*}
g'(s) &= \nabla f(z(s))\cdot \dot z(s) = - \|\nabla f(z(s))\|\\
 g''(s) &= - \frac{\nabla f(z(s))}{\|\nabla f(z(s))\|} \cdot D^2f(z(s)) \cdot \dot z(s)
 	= 2 \frac{\nabla f(z(s))}{\|\nabla f(z(s))\|} \cdot D^2f(z(s)) \cdot\frac{\nabla f(z(s))}{\|\nabla f(z(s))\|} \geq 0
\end{align*}
since $D^2f$ is non-negative semi-definite. The function $g$ is therefore convex and $C^1$-smooth on its domain of definition.

We extend $g$ to the entire real line by setting $g(s) = \inf f$ if $s>R$ and $g(s) = g(0) + g'(0) s$ if $s<0$. This results in a $C^1$-extension, but we note that it could easily be made $C^2$-smooth, at least at $s=0$.

{\bf Step 2.} We consider a gradient flow curve $s$ of $g:\R\to\R$ such that $s(0) = 0$. Then $g(s(0)) = f(x_0)$ by construction and
\[
\dot s_t = -g'(t) = - \|\nabla f(z(s))\| \quad\Ra\quad \frac{d}{dt} g(s_t) = - |g'(s_t)|^2 = - \|\nabla f(z(s))\|^2.
\]
In particular $f(x_t) = g(s_t)$ for all $t$ since their derivatives coincide and they take the same value at $t=0$.  
\end{proof}

\subsection{Gradient flows for convex functions without minimizers}
\label{section no minimizers}

For convex functions without minimizers, the decay of energy along a gradient flow can be arbitrarily slow, even in one dimension.

\begin{lemma}\label{lemma gf no minimizers}
    Assume that $g:[0,\infty)\to \R$ is a function such that $\lim_{t\to\infty}g(t) = 0$. Then there exists a convex function $f:\R\to\R$ and a gradient flow $x_t$ of $g$ such that $\inf_{x\in\R}f(x) =0$ and $f(x_t)\geq g(t)$ for all $t\geq 1$.
\end{lemma}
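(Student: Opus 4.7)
The plan is to construct $f$ so that along its gradient flow from $x_0=0$ one has $f(x_t)=h(t)$ identically for a suitable smooth, convex, strictly-decreasing majorant $h$ of $g$ tending to $0$. This inverts the reparametrization used in the proof of Lemma~\ref{lemma convex decay 1d}.

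\textbf{Step 1 (Reduction to a smooth convex majorant).} Since the $f$ built below will be non-negative, it suffices to handle the positive part $g^+:=\max(g,0)$, and we may further replace $g^+$ by the decreasing envelope $\bar g(t) := \sup_{s\ge t} g^+(s)$, which still tends to $0$. To majorize $\bar g$ by a smooth convex decreasing function, pick $\tau_0\geq 1$ with $\bar g(\tau_0)\le 1$ and inductively $\tau_{n+1}$ satisfying both $\bar g(\tau_n)\le 2^{-n}$ and $\tau_{n+1}-\tau_n \ge 2(\tau_n-\tau_{n-1})$. The piecewise-linear interpolant $h_0$ through the points $(\tau_n, 2^{1-n})$ is convex (the slope magnitudes halve at each kink by the gap-doubling condition) and satisfies $h_0(t)\ge 2^{-n}\ge \bar g(t)$ on each interval $[\tau_n,\tau_{n+1}]$. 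Extending $h_0$ linearly to the left and convolving with a symmetric smooth bump produces, via Jensen's inequality, a smooth convex non-increasing $h\ge h_0 \ge g$ with $h\to 0$.

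\textbf{Step 2 (Curve and candidate $f$).} Define $x_t := \int_0^t \sqrt{-h'(s)}\,ds$, $R := \lim_{t\to\infty} x_t \in (0,\infty]$, and let $\tau:[0,R)\to[0,\infty)$ denote the inverse of $t\mapsto x_t$. On $[0,R)$ set $f(y):=h(\tau(y))$. Then $f(x_t)=h(t)$ by construction, and the chain rule gives $f'(y) = h'(\tau(y))/\dot x_{\tau(y)} = -\sqrt{-h'(\tau(y))} = -\dot x_{\tau(y)}$, so $\dot x_t = -f'(x_t)$, the gradient flow equation. Convexity of $h$ makes $\dot x_t = \sqrt{-h'(t)}$ non-increasing in $t$, and hence $f'(y) = -\dot x_{\tau(y)}$ is non-decreasing in $y$; thus $f$ is convex on $[0,R)$.

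\textbf{Step 3 (Extension and conclusion).} Extend $f$ linearly to $(-\infty,0]$ by $f(y)=f(0)+f'(0)\,y$, preserving convexity. If $R=\infty$, then $\inf_{\R} f = \lim_{y\to\infty} f(y) = \lim_{t\to\infty} h(t) = 0$. If $R<\infty$, convexity of $h$ together with $h\to 0$ forces $h'(t)\to 0$, so $f'(y)\to 0$ as $y\to R^-$; extending by $f\equiv 0$ on $[R,\infty)$ yields a $C^1$ convex function with $\inf f=0$. In either case $f(x_t)=h(t)\ge g(t)$ for all $t\ge 1$. The most delicate step is Step 1 — constructing the smooth convex majorant — while Step 2 is essentially the reparametrization already used in the proof of Lemma~\ref{lemma convex decay 1d}.
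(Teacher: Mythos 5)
Your proof is correct in its overall architecture and follows the same two-step strategy as the paper: first majorize $g$ by a smooth convex non-increasing function $h$ tending to $0$, then realize $h$ as the decay curve of a one-dimensional gradient flow via the arc-length reparametrization of Lemma \ref{lemma convex decay 1d} (your Steps 2--3 are exactly that argument, including the correct observation that integrability of $\sqrt{-h'}$ is not needed here, and your handling of the case $R<\infty$ is fine since the lemma only requires $\inf f=0$, not the absence of a minimizer). The one genuine difference is how the convex majorant is built. The paper first smooths $\tilde g$ by a moving average and then uses the weighted-integral formula $\phi(t)=\int_t^\infty (s-t)\frac{-\tilde g'(s)}{s}\,ds$, which is sandwiched between $\tilde g(2t)/2$ and $\tilde g(t)$ and becomes a majorant after a pre-rescaling of $\tilde g$. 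You instead interpolate piecewise-linearly through a sparse sequence $(\tau_n, 2^{1-n})$ with rapidly growing gaps and then mollify. Your route is more elementary and avoids differentiating $g$ at all; the paper's is more compact. Both are valid.

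There is one small gap in Step 1: you verify $h_0\geq\bar g$ only on $[\tau_0,\infty)$, but the lemma requires $f(x_t)\geq g(t)$ for all $t\geq 1$, and if $\bar g(1)>1$ you are forced to take $\tau_0>1$. The linear left-extension of $h_0$ through $(\tau_0,2)$ need not dominate $\bar g$ on $[1,\tau_0]$, since $\bar g$ may jump to a large value immediately to the left of $\tau_0$ while the extension is still close to $2$ there. This is easily repaired: anchor the interpolation at $\tau_0=1$ with node values $v_n=v_0 2^{-n}$, $v_0:=\max\{2,\,2\bar g(1)\}$, and choose $\tau_{n}$ so that $\bar g(\tau_n)\leq v_0 2^{-n-1}$; then $h_0(t)\geq v_{n+1}\geq\bar g(\tau_n)\geq\bar g(t)$ on each $[\tau_n,\tau_{n+1}]$ with $n\geq 0$, covering all of $[1,\infty)$. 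With that adjustment the argument is complete.
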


\begin{proof}
    {\bf Step 1.} We make two adjustments.
    \begin{enumerate}
        \item Replace $g$ by $\tilde g(t) = \max_{s\geq t}g(s)$ to ensure that the function is monotone non-increasing.
        \item Replace $g(t)$ by $\int_{t-1}^t g(s)\ds \geq g(t)$ (since $g$ is non-increasing).
    \end{enumerate}
    Using the two modifications, we may assume that $g$ is $C^1$-smooth and monotone non-increasing. 
    
    {\bf Step 2.} We construct a {\em convex} function $\phi\geq g$ such that $\lim_{t\to \infty}\phi(t) = 0$. Namely, set
    \[
    \phi(t) = \int_t^\infty (s-t)\,\frac{-g'(s)}s\ds.
    \]
    Then $ \lim_{t\to \infty} \phi(t) = 0$ since $g'$ is integrable and $\frac{s-t}s\leq 1$. Furthermore
    \begin{align*}
        \phi'(t) &= \int_t^\infty \frac{-g'(s)}s\ds 
    \end{align*}
    is monotone decreasing since the domain of integration is shrinking and $g'\leq 0$. Thus $g$ is convex. Finally, we note that
    \[
    \phi(t) = \int_t^\infty (s-t)\,\frac{-g'(s)}s\ds \leq \int_t^\infty \big(-g'(s)\big)\ds = g(t)
    \]
    for all $t>0$ since $-g'\geq 0$ and $\frac{s-t}{s}\leq 1$. On the other hand
    \[
    \phi(t) = \int_t^\infty \frac{s-t}s\,\big(-g'(s)\big)\ds 
    \geq \int_{2t}^\infty \big(1-t/s\big)\,\big(-g'(s)\big)\ds
    \geq \frac12 \int_{2t}^\infty\big(-g'(s)\big)\ds = \frac{g(2t)}2.
    \]
    Rescaling $g$ before, we can reach $\phi(t) \geq g(t)$ instead.

    {\bf Step 3.} By the same argument as in Lemma \ref{lemma convex decay 1d}, we see that there exists a convex function 
    \[
    f:\R\to\R, \qquad f\left(\int_0^t \sqrt{\phi'(s)}\ds\right) = \phi(t)
    \]
    and a gradient flow $x_t$ of $f$ such that $f(x_t) = \phi(t) \geq g(t)$. The integrability of $\sqrt{\phi'}$ is not needed in this context as we do not insist that $f$ has a minimizer.
\end{proof}

\section{Gradient descent in discrete time} \label{section gradient descent}

In this section, we prove that the improved convergence result of Lemma \ref{lemma integrability gf} carries over to the explicit Euler time-stepping scheme for the gradient flow ODE, i.e.\ the gradient descent (GD) algorithm.

\subsection{Deterministic gradient descent}

We prove an analogue of Lemma \ref{lemma integrability gf} for gradient descent in discrete time.

\begin{lemma}
Let $f:\R^m\to\R$ be a convex function, $x^*\in\R^m$ such that $f(x^*) = \inf_{x\in\R^m}f(x)$ and $x_1\in\R^m$ an initial condition. Assume that $\nabla f$ is $L$-Lipschitz continuous with respect to the Euclidean norm. If the sequence $(x_n)_{n\in\N}$ follows the gradient descent law $x_{n+1} = x_n - \eta \nabla f(x_n)$ for a step-size $0<\eta<2/L$, then
\[
\eta\sum_{n=0}^\infty \big(f(x_n) - f(x^*)\big) \leq \frac{\|x_0 - x^*\|^2}2 + \frac{\eta}{2(1- L\eta/2)}\,\big(f(x_0) - f(x^*)\big).
\]
\end{lemma}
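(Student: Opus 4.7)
The plan is to mimic the first proof of Lemma \ref{lemma integrability gf}, where the key identity was that $\tfrac{d}{dt}\|x_t-x^*\|^2$ dominates $-2(f(x_t)-f(x^*))$. In discrete time the analogous calculation produces an extra quadratic error term from the step, and the heart of the argument will be to absorb that error into a telescoping estimate of $f(x_n)-f(x^*)$.

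First I would expand the squared distance after one gradient step:
\[
\|x_{n+1}-x^*\|^2 = \|x_n-x^*\|^2 - 2\eta\,\langle \nabla f(x_n),\,x_n-x^*\rangle + \eta^2\,\|\nabla f(x_n)\|^2,
\]
and then invoke convexity in the form $\langle \nabla f(x_n),x_n-x^*\rangle \geq f(x_n)-f(x^*)$ to obtain
\[
\|x_{n+1}-x^*\|^2 \leq \|x_n-x^*\|^2 - 2\eta\,(f(x_n)-f(x^*)) + \eta^2\,\|\nabla f(x_n)\|^2.
\]
This is the discrete analogue of the monotonicity of $\|x_t-x^*\|^2$ used in the continuous case, modulo the unwanted $\eta^2\|\nabla f(x_n)\|^2$ term that has no counterpart in the ODE limit.

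The main obstacle is therefore controlling that error term, and this is where the assumption $\eta<2/L$ enters. I would invoke the standard descent lemma: since $\nabla f$ is $L$-Lipschitz, the Taylor-type bound $f(y)\leq f(x)+\langle \nabla f(x),y-x\rangle+\tfrac{L}{2}\|y-x\|^2$ applied to $x=x_n$, $y=x_{n+1}$ gives
\[
f(x_{n+1}) \leq f(x_n) - \eta\big(1-\tfrac{L\eta}{2}\big)\,\|\nabla f(x_n)\|^2,
\]
so that
\[
\eta^2\,\|\nabla f(x_n)\|^2 \leq \frac{\eta}{1-L\eta/2}\,\big(f(x_n)-f(x_{n+1})\big).
\]
Substituting this back yields
\[
2\eta\,(f(x_n)-f(x^*)) \leq \|x_n-x^*\|^2-\|x_{n+1}-x^*\|^2 + \frac{\eta}{1-L\eta/2}\,\big(f(x_n)-f(x_{n+1})\big),
\]
which is a single inequality where both sides telescope upon summation.

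Finally, I would sum from $n=0$ to $n=N$ and observe that the distance terms collapse to $\|x_0-x^*\|^2-\|x_{N+1}-x^*\|^2\leq \|x_0-x^*\|^2$, while the excess-energy differences collapse to $f(x_0)-f(x_{N+1})\leq f(x_0)-f(x^*)$; letting $N\to\infty$ and dividing by $2$ produces exactly the claimed bound. The sole subtlety is checking that $1-L\eta/2>0$, which is built into the hypothesis $\eta<2/L$ and guarantees that the descent lemma coefficient is strictly positive.
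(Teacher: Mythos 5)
Your proposal is correct and follows essentially the same route as the paper's proof: expand the squared distance to $x^*$, apply the first-order convexity inequality, absorb the $\eta^2\|\nabla f(x_n)\|^2$ term via the descent lemma with coefficient $\eta(1-L\eta/2)$, and telescope both sums. The resulting constants match the statement exactly.
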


If $\eta = 1/L$, then this becomes
\[
\sum_{n=0}^\infty \big(f(x_n) - f(x^*)\big) \leq \frac{L\,\|x_0 - x^*\|^2}2 + \big(f(x_0) - f(x^*)\big).
\]

\begin{proof}
{\bf Step 1.} Denote $g_n = \nabla f(x_n)$. The discrete time analogue of the continuous time energy dissipation identity $\frac{d}{dt} f(x_t) = -\|\nabla f(x_t)\|^2$ is 
\[
f(x_{n+1}) \leq f(x_n) - \left(1-\frac{L\eta}2\right)\eta\,\|\nabla f(x_n)\|^2,
\]
as proved e.g.\ in \cite[Lemma 3.4]{bubeck2015convex}. If $\eta < 2/L$, the factor $\gamma= \left(1-\frac{L\eta}2\right)\eta$ is non-negative.

{\bf Step 2.}
The sequence $L_n = \|x_n - x^*\|^2$ satisfies
\begin{align*}
L_{n+1} &= \|x_{n+1} - x_n + x_n - x^*\|^2\\
    &= \|\eta \nabla f(x_n)\|^2 + 2\langle -\eta \nabla f(x_n), \,x_n-x^*\rangle + \|x_n-x^*\|^2\\
    &\leq \eta^2\,\|\nabla f(x_n)\|^2 - 2\eta \big(f(x_n) - f(x^*)\big) +L_n
\end{align*}
by the first order convexity condition. In particular, we have
\begin{align*}
\|x_0 - x^*\|^2 = L_0 &\geq \sum_{n=0}^\infty \big(L_n - L_{n+1}\big)\\
    &\geq 2\eta\sum_{n=0}^\infty \big(f(x_n) - f(x^*)\big) - \frac{\eta^2}\gamma \sum_{n=0}^\infty \gamma \,\|\nabla f(x_n)\|^2\\
    &\geq 2\eta\sum_{n=0}^\infty \big(f(x_n) - f(x^*)\big) - \frac{\eta^2}\gamma \sum_{n=0}^\infty \big(f(x_n) - f(x_{n+1})\big),
\end{align*}
so
\[
\eta\sum_{n=0}^\infty \big(f(x_n) - f(x^*)\big) \leq \frac{\|x_0 - x^*\|^2}2 + \frac{\eta^2}{2\gamma}\,\big(f(x_0) - f(x^*)\big).\qedhere
\]
\end{proof}

A discrete time analogue of Lemma \ref{lemma monotone integrable decay} with essentially the same proof states that, if $a_n$ is a monotone decreasing and summable sequence, then $\lim_{n\to\infty} (n\cdot a_n) = 0$.  As a consequence, we note the following.

\begin{corollary}
    Let $f:\R^m\to\R$ be a convex function, $x^*\in\R^m$ such that $f(x^*) = \inf_{x\in\R^m}f(x)$ and $x_1\in\R^m$ an initial condition. Assume that $\nabla f$ is $L$-Lipschitz continuous with respect to the Euclidean norm. If the sequence $(x_n)_{n\in\N}$ follows the gradient descent law $x_{n+1} = x_n - \eta \nabla f(x_n)$ for a step-size $\eta<2/L$, then
    \begin{enumerate}
        \item $\lim_{n\to \infty} n\cdot \big(f(x_n) - f(x^*)\big) = 0$.
        \item $\liminf_{n\to\infty} n\,\log(n) \cdot \big(f(x_n) - f(x^*)\big) =0$.
    \end{enumerate}
\end{corollary}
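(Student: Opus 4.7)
The plan is to set $a_n := f(x_n) - f(x^*)\geq 0$ and verify the two hypotheses needed to apply a discrete analogue of Lemma \ref{lemma monotone integrable decay} to $(a_n)$: namely that $(a_n)$ is monotone decreasing and summable. Monotonicity is essentially built into the step-size condition $\eta<2/L$: the discrete energy dissipation identity recalled in Step 1 of the preceding lemma's proof gives $f(x_{n+1})\leq f(x_n)-\gamma\|\nabla f(x_n)\|^2$ with $\gamma=(1-L\eta/2)\,\eta\geq 0$, so $f(x_n)$ is non-increasing, hence so is $a_n$. Summability $\sum_{n=0}^\infty a_n<\infty$ is exactly the content of the previous lemma.

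For part (1), I would record the discrete version of Lemma \ref{lemma monotone integrable decay}: if $(b_n)$ is a non-negative, monotone decreasing, summable sequence, then $n\,b_n\to 0$. The proof is the same sandwich argument, estimating
\[
\sum_{k=\lfloor n/2\rfloor}^\infty b_k \geq \sum_{k=\lfloor n/2\rfloor}^{n} b_k \geq (n-\lfloor n/2\rfloor)\,b_n \geq \frac{n}2\,b_n,
\]
and noting that the left-hand side vanishes as $n\to\infty$ by summability. Applying this to $(a_n)$ yields $\lim_{n\to\infty} n\cdot a_n=0$.

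For part (2), the $\liminf$ statement follows from a comparison with the divergent series $\sum \frac{1}{n\log n}$. Suppose for contradiction that $\liminf_{n\to\infty} n\log(n)\cdot a_n =: \delta > 0$. Then for all sufficiently large $n$ we would have $a_n \geq \frac{\delta/2}{n\log n}$, which combined with the divergence $\sum_{n\geq 2}\frac{1}{n\log n}=+\infty$ contradicts $\sum_n a_n<\infty$. Hence $\liminf_{n\to\infty} n\log(n)\cdot a_n=0$, as claimed. (Monotonicity of $a_n$ is not actually needed for this step; summability alone suffices against any non-negative divergent series $\sum b_n$, giving $\liminf a_n/b_n=0$.)

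The proof is essentially a routine application of the previous lemma together with the standard monotone-summable principle, so there is no serious obstacle here; the only point worth being careful about is ensuring that the choice $\eta<2/L$ really does deliver monotone decay of $f(x_n)$, which is immediate from the step-size condition via the recalled dissipation inequality.
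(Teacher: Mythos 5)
Your proposal is correct and follows exactly the route the paper intends: summability of $f(x_n)-f(x^*)$ from the preceding lemma, monotonicity from the dissipation inequality under $\eta<2/L$, the discrete sandwich argument for part (1), and comparison with the divergent series $\sum 1/(n\log n)$ for part (2). The paper leaves this proof implicit, merely citing the discrete analogue of Lemma \ref{lemma monotone integrable decay}, so your write-up fills in precisely the intended details.
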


We conjecture that also in the discrete time setting, a stronger result in the spirit of Corollary \ref{corollary log2 decay} can be obtained under the additional finite path length assumption, i.e.\ if 
\[
\sum_{n=1}^\infty \|x_n - x_{n+1}\| = \eta\,\sum_{n=1}^\infty \|\nabla f(x_n)\|< \infty.
\]
The finite path-length assumption is true for gradient-descent trajectories in finite-dimensional spaces due to \cite{gupta2021path}.

\subsection{Stochastic gradient descent with multiplicative noise} 

We briefly consider a generalization of the gradient descent scheme discussed previously to the case where only stochastic gradient estimates are available. For background on conditional expectations, see e.g.\ Chapter 8 in \cite{klenke2013probability}.

We assume that all random variables are defined on the same probability space, which remains implicit in our analysis and is only required to be large enough to support sufficiently many independent variables. We assume that a (random) initial condition $X_0$ is given, and that we have access to a random variable $g_0$ such that $\E\big[ g_0 | \sigma(X_0)\big] = \nabla f(X_0)$ where $\sigma(X_0)$ denotes the $\sigma$-algebra generated by $X_0$. We can thus take a first step in the stochastic gradient procedure $X_1 = X_0 - \eta g_0$.

More generally, after taking $n$ steps, we set $\F_n = \sigma(X_0, \dots X_n)$ and assume that we again have a gradient estimate $g_n$ such that $\E[g_n|\F_n] = \nabla f(X_n)$, so that we can define $X_{n+1} = X_n - \eta g_n$.

An additional assumption is required in analyses of stochastic gradient descent to quantify the oscillations of $g_n$ around its mean. We make the multiplicative noise scaling assumption 
\[
\E\big[\|g_n - \nabla f(X_n)\|^2 \,\big|\,\F_n\big] \leq \sigma^2 \,\|\nabla f(X_n)\|^2.
\]
This assumption has recently gained popularity to model the overparametrized regime in deep learning with mean squared error \cite{DBLP:journals/corr/abs-1811-02564, wojtowytsch2021stochasticdiscrete, noisygradientsacceleration}.

\begin{lemma}
Assume that $f$ is convex, $\nabla f$ is $L$-Lipschitz continuous with respect to the Euclidean norm and consider the SGD trajectory with estimators $g_n$ as described above and a step size $\eta < \frac1{L(1+\sigma^2)}$. 

Under these conditions, $\E\big[f(X_n)\big]$ is monotone decreasing. If $f(x^*) = \inf f$, then 
\[
\eta \sum_{n=0}^\infty \E\big[ f(X_n) - \inf f\big] \leq \frac{\E\big[\|X_0-x^*\|^2\big]}2 + \frac{\eta(1+\sigma^2)}{1- L(1+\sigma^2)\eta/2} \,\E\big[f(X_0)-\inf f\big].
\]
\end{lemma}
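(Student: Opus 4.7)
The plan is to mirror the structure of the deterministic gradient descent proof, with every scalar inequality now interpreted in conditional expectation with respect to the filtration $\F_n = \sigma(X_0,\dots,X_n)$. Two ingredients drive the argument: a stochastic one-step descent inequality (playing the role of $f(x_{n+1})\leq f(x_n)-\gamma\|\nabla f(x_n)\|^2$) and the standard Lyapunov recursion for $L_n=\|X_n-x^*\|^2$. The multiplicative noise assumption will be invoked only through the identity
\[
\E\big[\|g_n\|^2\,\big|\,\F_n\big] = \|\nabla f(X_n)\|^2 + \E\big[\|g_n-\nabla f(X_n)\|^2\,\big|\,\F_n\big] \leq (1+\sigma^2)\|\nabla f(X_n)\|^2,
\]
which converts second moments of the noisy gradient into a constant multiple of $\|\nabla f(X_n)\|^2$.

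For the descent step, I would apply the standard $L$-smoothness bound $f(X_{n+1})\leq f(X_n)+\langle \nabla f(X_n), X_{n+1}-X_n\rangle +\frac{L}{2}\|X_{n+1}-X_n\|^2$ along the recursion $X_{n+1}=X_n-\eta g_n$, then take conditional expectation: the cross term becomes $-\eta\|\nabla f(X_n)\|^2$ since $\E[g_n|\F_n]=\nabla f(X_n)$, and the quadratic term is bounded via the display above. This produces
\[
\E\big[f(X_{n+1})\,\big|\,\F_n\big] \leq f(X_n) - \gamma\, \|\nabla f(X_n)\|^2, \qquad \gamma := \eta\Big(1-\tfrac{L\eta(1+\sigma^2)}{2}\Big) > 0,
\]
where positivity of $\gamma$ follows from the step-size restriction $\eta<1/(L(1+\sigma^2))$. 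Taking full expectation and telescoping gives both that $n\mapsto\E[f(X_n)]$ is non-increasing and the a priori estimate $\gamma\sum_{n\geq 0}\E[\|\nabla f(X_n)\|^2]\leq \E[f(X_0)-f(x^*)]$, analogous to the deterministic energy dissipation bound.

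For the Lyapunov step, I would expand $L_{n+1} = \|X_n - x^* - \eta g_n\|^2$ as in the deterministic lemma and take conditional expectation. Using $\E[g_n|\F_n]=\nabla f(X_n)$ on the cross term, first-order convexity on the resulting inner product, and the conditional second-moment bound on $\|g_n\|^2$, one obtains
\[
\E\big[L_{n+1}\,\big|\,\F_n\big]\leq L_n - 2\eta\,(f(X_n)-f(x^*)) + \eta^2(1+\sigma^2)\|\nabla f(X_n)\|^2.
\]
Taking unconditional expectation, summing the telescoping differences $\E[L_n]-\E[L_{n+1}]$, and substituting the gradient-norm bound from the previous paragraph produces
\[
\E[L_0] \geq 2\eta\sum_{n=0}^\infty \E[f(X_n)-f(x^*)] - \frac{\eta^2(1+\sigma^2)}{\gamma}\,\E[f(X_0)-f(x^*)],
\]
which rearranges to the claimed inequality. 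The main subtlety, rather than an obstacle, is the careful use of the tower property when passing between $\F_n$-conditional and unconditional expectations; importantly, independence of the noise across time is never needed — only the martingale identity $\E[g_n|\F_n]=\nabla f(X_n)$ together with the multiplicative bound on the conditional second moment.
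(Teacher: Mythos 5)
Your proposal is correct and follows essentially the same route as the paper: the same Lyapunov recursion for $\E[\|X_n-x^*\|^2]$, the same telescoping, and the same substitution of the summed gradient norms via the expected descent inequality with $\gamma=\eta\bigl(1-\tfrac{L(1+\sigma^2)\eta}{2}\bigr)$. The only difference is that you derive the one-step energy dissipation from $L$-smoothness and the conditional second-moment bound directly, whereas the paper cites it from an external reference; your version is self-contained and in fact yields the bound with an extra factor $\tfrac12$ on the second term, which is slightly stronger than (and implies) the stated inequality.
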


In particular, with the choice $\eta = 1/(L(1+\sigma^2))$, which is optimal in terms of proving decay, this becomes
\[
\sum_{n=0}^\infty \E\big[ f(X_n) - \inf f\big] \leq \frac{ L(1+\sigma^2)}2 \,\E\big[ \|X_0-x^*\|^2\big] + 2(1+\sigma^2)\,\E\big[f(X_0) - \inf f\big]
\]
The set-up and step 0 in the proof are well-known in stochastic optimization --- many details can be found e.g.\ in \cite{wojtowytsch2021stochasticdiscrete, noisygradientsacceleration} and the references cited there. 

We note that it would also be possible to consider a random variable $X^*$ which is $\F_0$-measurable such that $f(X^*) \equiv 0$ almost surely. In convex functions which do not admit a unique minimizer, this allows us to select the unique closest point projection of $X_0$ onto the closed and convex set of minimizers. The $\sigma$-algebras do not change under this modification, but the constant may be reduced significantly.

\begin{proof}

{\bf Step 1. Energy dissipation.} In the stochastic setting, the expected energy dissipation identity
\[
\E[f(x_{n+1}] \leq \E[f(x_n)]- \eta\left(1- \frac{L\eta}2\right) \E[\|\nabla f(x_n)\|^2]
\]
holds if the noise scales multiplicatively \cite[Lemma 16]{noisygradientsacceleration}.


{\bf Step 2. Summability.}
Consider the sequence $L_n:= \E\big[ \|X_n-x^*\|^2\big]$. We find that
\begin{align*}
    L_{n+1} &= \E\big[ \|X_{n+1}-X_n + X_n-x^*\|^2\big]\\
        &= \E\big[ \|X_{n+1} - X_n\|^2\big] + 2\,\E\big[\langle X_{n+1}-X_n, \,X_n-x^*\rangle \big] + L_n\\
        &= \eta^2\,\E\big[ \|g_n\|^2\big] - 2\eta\,\E\big[ \langle g_n, X_n-x^*\rangle\big]+ L_n\\
        &\leq \eta^2(1+\sigma^2)\,\E\big[\|\nabla f(X_n)\|^2\big] - 2\eta\,\E\big[ \langle \nabla f(X_n), X_n-x^*\rangle\big] + L_n\\
        &\leq \eta^2(1+\sigma^2)\,\E\big[\|\nabla f(X_n)\|^2\big] - 2\eta \E\big[ f(X_n) - f(x^*)\big] + L_n.
\end{align*}
Since $0\leq L_n$ for all $n$ we see that
\[
L_0 \geq L_0 - L_{n+1} = \sum_{k=0}^n (L_k - L_{k+1}) \geq2\eta \sum_{k=0}^n\E\big[ f(X_n) - f(x^*)\big] - \eta^2(1+\sigma^2) \sum_{k=0}^n \E\big[\|\nabla f(X_n)\|^2\big]
\]
and hence
\begin{align*}
\eta \sum_{k=0}^n \E\big[ f(X_n) - \inf f\big]
    &\leq \frac{L_0}2 + \frac{\eta^2(1+\sigma^2)}{\eta (1- \frac{L(1+\sigma^2)\eta}2)} \E\big[ f(X_0)-\inf f]\\
    &\leq \frac{\E\big[\|X_0-x^*\|^2\big]}2 + \frac{\eta(1+\sigma^2)}{1- L(1+\sigma^2)\eta/2} \,\E\big[f(X_0)-\inf f\big]
\end{align*}
\end{proof}

As a consequence of the summable decay, we immediately see that 
\[
\lim_{n\to\infty} n\cdot \E\big[ f(X_n) - \inf f\big] =0. 
\]
The more quantitative estimate
\[
\E[f(x_n) - f(x^*)] \leq (1+\sigma^2)\, \frac{\E[f(x_0) - f(x^*)] + \frac L2\,\E\big[\|x_0-x^*\|^2\big]}{ n+1+\sigma^2}.
\]
is derived in \cite[Lemma 7]{noisygradientsacceleration}.
As an application, we prove that the random variables $f(X_n)$ converge to $\inf f$ in a stronger fashion.

\begin{corollary}
Let $X_n$ be stochastic gradient iterates for $f$. If $\E[ f(X_0) + \|X_0\|^2] < \infty$, then $f(X_n)$ converges to $\inf f$ both almost surely and in $L^1$.
\end{corollary}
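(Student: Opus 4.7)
The preceding lemma does essentially all of the work: it gives us the summability
\[
\sum_{n=0}^\infty \E\big[f(X_n) - \inf f\big] \;\leq\; \frac{\E\big[\|X_0-x^*\|^2\big]}2 + \frac{\eta(1+\sigma^2)}{1-L(1+\sigma^2)\eta/2}\,\E\big[f(X_0)-\inf f\big],
\]
and the integrability hypothesis $\E[f(X_0) + \|X_0\|^2] < \infty$ is exactly what is needed to make the right-hand side finite. Indeed, since $f$ has a minimizer $x^*$, the value $\inf f = f(x^*)$ is finite, so $\E[f(X_0) - \inf f] < \infty$; and from $\|X_0-x^*\|^2 \leq 2\|X_0\|^2 + 2\|x^*\|^2$ we get $\E[\|X_0-x^*\|^2] < \infty$. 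Thus the summability is in force.

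\textbf{$L^1$-convergence.} The random variables $Y_n := f(X_n) - \inf f$ are non-negative, and the series $\sum_n \E[Y_n]$ converges. In particular the general term tends to zero, i.e.\ $\E[Y_n] \to 0$. Since $Y_n \geq 0$, this is exactly $\|Y_n\|_{L^1} \to 0$, so $f(X_n) \to \inf f$ in $L^1$.

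\textbf{Almost sure convergence.} By Tonelli's theorem applied to the non-negative sequence $Y_n$,
\[
\E\!\left[\sum_{n=0}^\infty Y_n\right] = \sum_{n=0}^\infty \E[Y_n] < \infty,
\]
so $\sum_{n=0}^\infty Y_n < \infty$ almost surely. Since the tail of a convergent series of non-negative numbers tends to zero, $Y_n \to 0$ almost surely, which is the claim. I do not foresee any genuine obstacle here: the only points requiring a line of comment are (i) verifying that the hypotheses of the summability lemma are satisfied under the stated integrability of $X_0$, and (ii) observing that almost-sure summability forces the individual terms to vanish, which upgrades the ``along a subsequence'' statement that would follow from the $O(1/n)$ rate alone to convergence of the full sequence.
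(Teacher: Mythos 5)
Your proof is correct and follows essentially the same route as the paper: both deduce $L^1$-convergence from the vanishing of the general term of the convergent series $\sum_n \E[f(X_n)-\inf f]$, and both deduce almost-sure convergence from the summability of the $L^1$-norms, the only difference being that you prove this last implication directly via Tonelli while the paper cites it from Klenke. Your preliminary check that $\E[f(X_0)+\|X_0\|^2]<\infty$ makes the bound of the preceding lemma finite is a welcome addition that the paper leaves implicit.
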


\begin{proof}
{\bf Convergence in $L^1$.} As $\sum_{n=0}^\infty \E\big[ f(X_n) - f(x^*)\big]$ converges, we find that $\lim_{n\to\infty} \E\big[ f(X_n) -\inf f\big] =0$. Since $f\geq \inf f$, this is the same as $L^1$-convergence
\[
\lim_{n\to\infty} \|f_n - \inf f\|_{L^1} = \lim_{n\to\infty} \E\big[ |f(X_n) - \inf f|\big] = \lim_{n\to\infty} \E\big[ f(X_n) -\inf f\big] =0.
\]

{\bf Convergence almost surely.} The fact that the sequence $\|f(X_n) - \inf f\|_{L^1} = \E\big[f(X_n) - f(x^*)\big]$ is summable implies convergence almost surely by \cite[Theorem 6.12]{klenke2013probability}.
\end{proof}

Without exploiting summability, we only get convergence almost surely for a subsequence $f(X_{n_k})$ of $ f(X_n)$ -- precisely such a sequence for which $\|f(X_{n_k}) - \inf f\|_{L^1}$ is summable.

\section{Momentum method}\label{section heavy ball}

\subsection{Continuous time heavy ball ODE}

In our analysis of accelerated gradient descent, let us consider the differential equation limit \cite{su2014differential}, given by
\begin{equation}\label{accelerated-diff-eq}
    \dot{x}(t) = v(t),~\dot{v}(t) = -\frac{\alpha}{t}v(t) - \nabla f(x_t).
\end{equation}
We have the following result concerning the integrability of the objective error $f(x_t) - f(x^*)$.

\begin{theorem}\label{theorem nesterov decay}
    Let $H$ be a Hilbert space and suppose that $f:H\rightarrow \mathbb{R}$ is a convex differentiable function and $x^*\in \arg\min_{x\in \mathbb{R}^d} f(x)$. Then if $x_t$ and $v(t)$ are defined via the differential equation \eqref{accelerated-diff-eq} with $\alpha > 3$, we have
    \begin{equation}
        \int_0^\infty t(f(x_t) - f(x^*))dt \leq \frac{(\alpha-1)^2\,\|x_0-x^*\|^2}{2(\alpha-3)}<\infty.
    \end{equation}
    The optimal bound is attained for $\alpha = 5$ where $\frac{(\alpha-1)^2}{2(\alpha-3)} = 4$.
\end{theorem}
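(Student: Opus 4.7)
The plan is a Lyapunov-function argument of Su--Boyd--Cand\`es type, but set up to extract the integrability of $t(f(x_t)-f(x^*))$ rather than merely the pointwise $O(1/t^2)$ rate. I would work with the candidate
\[
\mathcal{E}(t) := t^2\bigl(f(x_t) - f(x^*)\bigr) + \frac12\,\bigl\| (\alpha-1)(x_t - x^*) + t\,v_t\bigr\|^2,
\]
which is visibly non-negative and, under the standard initial data $v_0=0$, satisfies $\mathcal{E}(0) = \tfrac{(\alpha-1)^2}{2}\,\|x_0 - x^*\|^2$. The coefficient $\alpha-1$ in front of $x_t-x^*$ is pinned down by a cancellation: differentiating the argument of the norm yields $(\alpha-1)v_t + v_t + t\dot{v}_t$, and the ODE identity $t\dot{v}_t = -\alpha v_t - t\nabla f(x_t)$ collapses this combination to exactly $-t\nabla f(x_t)$.

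With this cancellation in hand I would expand
\[
\mathcal{E}'(t) = 2t\bigl(f(x_t)-f(x^*)\bigr) + t^2\langle \nabla f(x_t), v_t\rangle + \bigl\langle (\alpha-1)(x_t-x^*) + t v_t,\; -t\nabla f(x_t)\bigr\rangle.
\]
The two $t^2\langle \nabla f(x_t),v_t\rangle$ contributions cancel, and the first-order convexity inequality $\langle \nabla f(x_t), x_t - x^*\rangle \geq f(x_t) - f(x^*)$ produces
\[
\mathcal{E}'(t) \leq \bigl(2 - (\alpha-1)\bigr)\,t\bigl(f(x_t)-f(x^*)\bigr) = -(\alpha-3)\,t\bigl(f(x_t)-f(x^*)\bigr).
\]
The hypothesis $\alpha > 3$ enters precisely here: the right-hand side becomes non-positive, so $\mathcal{E}$ is monotone decreasing, and the integrand of interest is controlled by $-\tfrac{1}{\alpha-3}\mathcal{E}'(t)$.

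Integrating from $0$ to $T$, discarding the non-negative boundary term $\mathcal{E}(T)$, and sending $T \to \infty$ would then give
\[
(\alpha-3)\int_0^\infty t\bigl(f(x_t)-f(x^*)\bigr)\dt \;\leq\; \mathcal{E}(0) \;=\; \frac{(\alpha-1)^2}{2}\,\|x_0 - x^*\|^2,
\]
which is the asserted bound. The optimal value of the constant then follows from elementary calculus: differentiating $(\alpha-1)^2/(\alpha-3)$ in $\alpha$ gives the critical equation $\alpha = 5$, at which the prefactor equals $4$.

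The only genuine obstacle I anticipate is the singular damping $\alpha/t$ at $t=0$, which threatens the regularity of $v_t$ and hence the legality of the above derivative computation on all of $[0,\infty)$. The standard remedy is to invoke well-posedness of the heavy-ball ODE with initial datum $v_0 = 0$ as in \cite{su2014differential}, together with the observation that the factor $t^2$ in $\mathcal{E}$ and the factor $t$ multiplying $v_t$ in the norm term absorb any mild singular behavior near the origin; after this check the Lyapunov computation is completely routine.
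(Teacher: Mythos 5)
Your proposal is correct and is essentially identical to the paper's argument: the same Lyapunov function $t^2(f(x_t)-f(x^*))+\tfrac12\|(\alpha-1)(x_t-x^*)+tv(t)\|^2$, the same cancellation via $\frac{d}{dt}[(\alpha-1)(x_t-x^*)+tv(t)]=-t\nabla f(x_t)$, the same convexity step yielding $L'(t)\leq(3-\alpha)t(f(x_t)-f(x^*))$, and the same integration. The details you supply (the explicit computation of $\mathcal{E}(0)$, the optimization over $\alpha$, and the remark on regularity at $t=0$) are all consistent with what the paper does or defers to \cite{su2014differential}.
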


This result was previoulsy obtained in \cite[Theorem 5]{su2014differential}. We provide a brief since we will use many of the same concepts below. In Section 4.2 of \cite{su2014differential}, the authors demonstrate that $\alpha< 2$ does not generally lead to energy decay even for quadratic functions and that $\alpha<3$ is generally inadmissible by considering $f(x) = |x|$.

In particular, `on average' $f(x_t)$ must decay faster than $O(1/t^2)$. A more quantitative statement does not follow immediately in this context since $f(x_t) - f(x^*)$ is not guaranteed to be monotone decreasing (and in many cases is not, see Section \ref{section nesterov oscillator}). However, it was observed in \cite{attouch2016rate} that $f(x_t) - f(x^*) = o(1/t^2)$ also for the discrete time Nesterov algorithm.

\begin{proof}
    The proof closely follows the argument from \cite{su2014differential}. We consider the Lyapunov function
    \begin{equation}
        L(t) = t^2(f(x_t) - f(x^*)) + \frac{1}{2}\|(\alpha - 1)(x_t - x^*)  + tv(t)\|^2
    \end{equation}
    Differentiation this, we obtain
    \begin{equation}
    \begin{split}
        L'(t) = 2t(f(x_t) - f(x^*)) + t^2\langle \nabla f(x_t), v(t)\rangle - \langle t\nabla f(x_t), (\alpha - 1)(x_t - x^*)  + tv(t)\rangle,
        \end{split}
    \end{equation}
    since $\frac{d}{dt}[(\alpha - 1)(x_t - x^*)  + tv(t)] = -t\nabla f(x_t)$. Simplifying this and using the fact that convexity of $f$ means that
    $$
    \langle\nabla f(x_t), x_t - x^*\rangle \geq f(x_t) - f(x^*),
    $$
    we get
    \begin{equation}
        L'(t) \leq (3-\alpha)t(f(x_t) - f(x^*))
    \end{equation}
    Since $L(t) \geq 0$ for $t \geq 0$, we get that
    \begin{equation}
        \int_0^t t(f(x_t) - f(x^*))dt \leq \frac{L(0)}{\alpha - 3} < \infty.\qedhere
    \end{equation}
\end{proof}

The same proof immediately shows that
\[
t^2\big(f(x_t) - f(x^*)\big) \leq L(t) \leq L(0) \quad \Ra\quad f(x_t) - f(x^*) \leq \frac{(\alpha-1)^2\,\|x_0-x^*\|^2}{2t},
\]
where choosing the minimal value $\alpha =3$ yields the optimal bound.
We further note an improvement of `physical' nature. Namely, if $\alpha\geq 3$, then the total energy (potential and kinetic) of a particle $x_t$ with mass $m =1/2$ satisfies
\begin{align*}
t^2\bigg( f(x_t) - f(x^*) &+ \frac{m}2\|\dot x_t\|^2\bigg) = t^2\left( f(x_t) - f(x^*) \right) + \frac{1}4 \left\|t\dot x_t\right\|^2\\
    &\leq t^2\left( f(x_t) - f(x^*) \right) + \frac{1}2 \left\|t\dot x_t+(\alpha-1)(x_t-x^*)\right\|^2 + \frac{(\alpha-1)^2}2 \|x_t-x^*\|^2\\  &\leq L(t) + \frac{(\alpha-1)^2}2 \|x_t-x^*\|^2\\
    &\leq L(0) + \frac{(\alpha-1)^2}2 \|x_t-x^*\|^2.
\end{align*}
In particular, in any situation in which the sub-level sets of $f$ are compact, we find that not only the potential, but also the kinetic energy of $x_t$ decays at least as fast as $t^{-2}$.

\subsection{The Nesterov Oscillator}\label{section nesterov oscillator}

As a special example, we consider the `Nesterov oscillator' equation
\begin{equation}\label{eq nesterov oscillator}
     \ddot x = - \frac\alpha t\,\dot x - \mu x, \qquad x_0 = x_0, \qquad \dot x_0 = 0,
\end{equation}
based on a dampened harmonic oscillator with Nesterov type friction. This equation arises as the heavy ball ODE for the convex function $f(x) = \frac\mu2 \,x^2$. The coefficient of friction is not chosen optimally for this function, which is indeed strongly convex, but the example provides us with valuable intuition and tools for the proof below. 
 
The Nesterov oscillator has time-variable friction which transitions from the over-dampened to the under-dampened regime when
\[
 \left(\frac{\alpha}{2t}\right)^2 = \mu \qquad \LRa \quad t^2 = \frac{\alpha^2}{4\mu}.
\]
Recall that solutions to the classical harmonic oscillator equation
\[
     \ddot x = - \beta\,\dot x - \mu x, \qquad x_0 = x_0, \qquad \dot x_0 = 0
\]
are given by 
\[
x(t) = \exp\left(-\frac{\beta}2t \right)\left(c_1\,\cos(\omega t) + c_2\,\sin(\omega t)\right), \qquad \omega:= \sqrt{\mu - \left(\frac\beta2\right)^2}
\]
in the underdampened regime where $(\beta/2)^2<\mu$ and 
\[
x(t) = c_1\,\exp\left(\lambda_+ t\right) + c_2\,\exp(\lambda_- t), \qquad \lambda_\pm = -\frac\beta 2 \pm \sqrt{\left(\frac\beta2\right)^2-\mu}
\]
in the overdampened regime where $(\beta/2)^2>\mu$ (see e.g.\ \cite[Section 10.4]{konigsberger1999folgen} for a derivation). Overdampened solutions can overshoot the minimizer if the initial velocity is high enough, but at most once, and approach exponentially fast in a monotone fashion afterwards, while an underdamped oscillator changes sign an infinite number of times.

\begin{figure}
    \centering
    \includegraphics[width=0.245\textwidth]{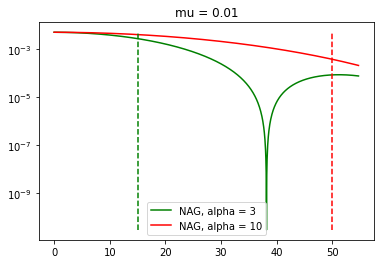}\hfill
    \includegraphics[width=0.245\textwidth]{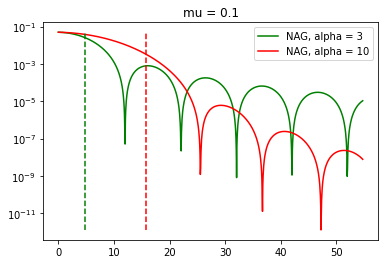}\hfill
    \includegraphics[width=0.245\textwidth]{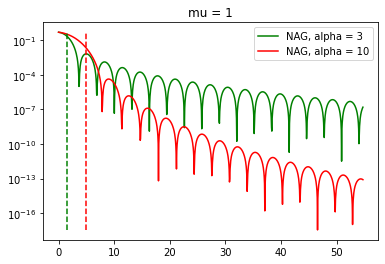}\hfill
    \includegraphics[width=0.245\textwidth]{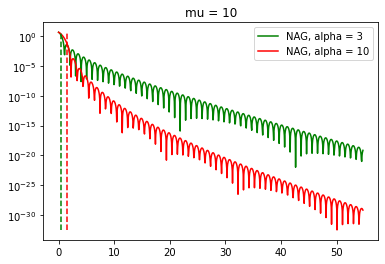}

    \caption{We consider numerical solutions of the Nesterov oscillator equation \eqref{eq nesterov oscillator} by the Nesterov algorithm \eqref{eq nesterov scheme} with step size $h=0.003$ and compare $f(x_t)$ for friction parameters $\alpha=3$ (green) and $\alpha =10$ (red) given the function $f(x) = \frac\mu2\,x^2$ where $\mu\in\{0.001, 0.1, 1, 10\}$ (left to right). The vertical dashed lines signal the critical time at which the oscillator transitions from the overdamped to the underdamped regime. We see that indeed, an initial slow overdamped period is followed by a period of oscillations whose length is essentially constant and only depends on $\mu$, but not $\alpha$. At the spikes, $f(x_t)$ behaves like $c^*\sqrt{|t-t^*|}$, since $\dot x_t\neq 0$, i.e.\ $x_t$ crosses the minimizer with positive velocity. If $\dot x_t$ were zero, the kinetic energy would be zero at the minimum, making the total energy zero and the system would be at equilibrium, meaning that there would be no crossing. This observation does not transfer to the higher-dimensional case.}
    \label{figure nesterov oscillator}
\end{figure}

With Nesterov friction, the coefficient of friction is initially infinitely strong. In the initial phase, we expect that solutions to the ODE gradually approach the minimizer $x=0$ in a monotone fashion. At time $t= \alpha/2\sqrt\mu$, the dampening changes type, and we expect the oscillator to change sign an infinite number of times with frequency approaching $\sqrt\mu$ as $t\to \infty$. In the long term, $f(x) = \frac\mu2\,x^2$ decays on average as $t^{-\alpha}$ according to \cite{aujol2019optimal}. More generally, the energy decays as $t^{-2\alpha/3}$ for general strongly convex functions due to \cite{attouch2018fast}. In the one-dimensional case, it crosses the global minimizer many times along this trajectory (at least in continuous time), but it does not come to rest. In the higher-dimensional case, the minimizer is not usually attained exactly at any finite time. More general results under more complicated curvature conditions are given in \cite{aujol2019optimal}.

A numerical investigation is given in Figure \ref{figure nesterov oscillator}, where we approximated $x(n\sqrt h)$ by the Nesterov scheme
\begin{equation}\label{eq nesterov scheme}
y_0 = x_0, \qquad x_{n+1} = y_n - h\,\nabla f(y_n), \qquad y_{n+1} = x_{n+1} + \frac{n}{n+\alpha} (x_{n+1}-x_n),
\end{equation}
which discretizes the heavy ball equation due to \cite{su2014differential}. 
Analytically, we focus on the initial overdamped phase. 
Since $x$ is not expected to change sign during this relatively brief period, we can make the ansatz
\[
x(t) = \exp\left(-\int_0^t \lambda(s)\ds\right)
\]
and compute
\[
\dot x = - \lambda\,x, \qquad \ddot x = (-\lambda' + \lambda^2) x
\]
such that
\[
0 \stackrel != \ddot x + \frac\alpha t\,\dot x + \mu x =  \left(-\lambda' + \lambda^2 - \frac{\alpha}t\lambda +\mu\right)x.
\]
The initial condition $0 = \dot x_0 = -\lambda(0)\,x_0$ induces the corresponding condition that $\lambda(0)=0$. We note that the functions
 \[
 \lambda_0(t) = 0, \qquad \lambda_+ (t) = \frac{\alpha}{2t}-  \sqrt{\left(\frac{\alpha}{2t}\right)^2 - \mu \:} 
 \]
 satisfy 
 \[
 \lambda_0^2 - \frac\alpha t\,\lambda_0 + \mu = \mu>0, \qquad \lambda_+^2 - \frac\alpha t\,\lambda_+ + \mu \equiv 0.
 \]
 In particular, by the comparison principle, we have $\lambda_0 \leq \lambda \leq \lambda_+$ for all $t\leq \frac{\alpha^2}{2\mu}$, at which point $\lambda_+$ is no longer defined. We conclude that
  \[
 x_+(t) := x_0\,\exp\left(\int_0^t\sqrt{\left(\frac{\alpha}{2s}\right)^2 - \mu\:} - \frac{\alpha}{2s}\ds\right)\qquad  \text{for } 0 < t < \frac{\alpha}{2\sqrt\mu}
 \]
 satisfies $|x_+(t)| \leq |x(t)|$.
Since $\sqrt{1-z}\geq 1-z$ for $z\in(0,1)$, we have
 \begin{align*}
 0 \leq \int_0^t \frac{\alpha}{2s} - \sqrt{\left(\frac\alpha{2s}\right)^2 - \mu}\ds
 	&= \int_0^t\frac{\alpha}{2s}\left(1 - \sqrt{1- \frac{4\mu s^2}{\alpha^2}}\right)\ds
	\leq \int_0^t \frac{\alpha}{2s} \cdot \frac{4\mu}{\alpha^2}s^2 \ds
	= \frac\mu{\alpha}\,t^2
 \end{align*}
for $t\in (0, \alpha/2\sqrt\mu)$. In particular, $\lim_{t\to 0}x_+(t) = x_0$. In particular, we see that
\[
|x(t)| \geq |x_+(t)| \geq |x_0|\,\exp\left(- \frac\mu\alpha\,t^2\right) \geq |x_0| \,\exp\left(-\frac\alpha4\right)\qquad\forall\ t\in\left(0, \frac{\alpha^2}{4\mu}\right).
\]
This settles the existence of an initial `slow' phase.

Notably, the heavy ball ODE with a momentum parameter scaling as $\alpha/t$ leads to much slower risk decay $t^{-2\alpha/3}$ in the strongly convex case than the gradient flow, which achieves the exponential rate $e^{-\mu t}$. With constant friction 
\[
\ddot x = - 2\sqrt\mu\,\dot x - \nabla f(x)
\]
where $\mu$ is the strong convexity constant of $f$, the decay is faster than the gradient flow with a decay as $e^{-\sqrt \mu\,t}$, see e.g.\ \cite[Theorem 1]{siegel2019accelerated}. This dampening is in fact optimal as seen in study of the harmonic oscillator.


\subsection{Heavy ball dynamics: Quadratic forms in Hilbert spaces}

We show that Theorem \ref{theorem nesterov decay} is essentially optimal by mimicking the argument of Lemma \ref{lemma slow convergence}.

\begin{lemma}\label{lemma nesterov lower bound}
Let $H$ be a separable Hilbert space. Then there exists a strictly convex quadratic function $F:H\to [0,\infty)$ such that $F(u) = 0$ if and only if $u=0$ and such the following is true: For any monotone decreasing function $g:(0,\infty)\to(0,\infty)$ such that 
\[
\int_0^\infty t\,g(t) \dt<\infty,
\]
there exists a solution $u$ to the heavy ball ODE 
\[
\ddot u = - \frac\alpha t\,\dot u - \nabla F(u)
\]
such that $F(u(t))\geq g(t) \quad\forall\ t\geq 1$.
\end{lemma}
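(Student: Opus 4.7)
\textbf{The plan} is to mimic Lemma \ref{lemma slow convergence}: work on $H = L^2(1,\infty)$ with the quadratic functional $F(u) = \tfrac{1}{2}\int_1^\infty u(s)^2/s\,\ds$, whose gradient $u \mapsto u/s$ is a bounded, positive, self-adjoint multiplication operator. Because this operator is diagonal, the heavy-ball PDE on $H$ decouples mode-by-mode into a family of scalar Nesterov-type oscillators
\[
\ddot u(t,s) + \frac{\alpha}{t}\,\dot u(t,s) + \frac{1}{s}\,u(t,s) = 0, \qquad u(0,s) = u_0(s), \quad \dot u(0,s) = 0,
\]
indexed by $s \in [1,\infty)$ with effective frequency $\mu(s) = 1/s$. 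The scaling $\mu = 1/s$ is chosen precisely so that the critical transition time $t^*(s) = \alpha/(2\sqrt{\mu}) = \alpha\sqrt{s}/2$ between overdamped and underdamped regimes grows linearly in $\sqrt{s}$; consequently, the modes I will leverage to lower-bound $F(u(t))$ at time $t$ are exactly those with $s \gtrsim t^2$.

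\textbf{Mode-wise lower bound.} The overdamped analysis of Section~\ref{section nesterov oscillator} yields $|u(t,s)| \geq |u_0(s)|\exp(-t^2/(\alpha s))$ whenever $t \leq \alpha\sqrt{s}/2$. Restricting to $s \geq 4t^2/\alpha^2$ keeps the exponent bounded by $\alpha/4$, so $|u(t,s)| \geq |u_0(s)|\,e^{-\alpha/4}$ in that range. Integrating against $1/s$, for every $t \geq \alpha/2$,
\[
F(u(t)) \;\geq\; \frac{e^{-\alpha/2}}{2}\int_{4t^2/\alpha^2}^\infty \frac{u_0(s)^2}{s}\,\ds.
\]
Setting $c := 4/\alpha^2$ and demanding the identity $\int_{ct^2}^\infty u_0(s)^2/s\,\ds = 2\,e^{\alpha/2}\,g(t)$, differentiation in $t$ produces the explicit prescription
\[
u_0(s)^2 \;:=\; -\sqrt{s/c}\;e^{\alpha/2}\,g'\!\bigl(\sqrt{s/c}\bigr),
\]
which is non-negative since $g$ is decreasing (and we may assume $g \in C^1$ after a standard mollification that preserves monotonicity and the bound $\int t\,g(t)\,\dt < \infty$). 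The initial interval $1 \leq t \leq \alpha/2$, where $ct^2$ drops below $1$, is trivial because $F(u_0)$ can be chosen large and dissipation of $F$ along the flow is controlled.

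\textbf{Feasibility of the initial datum.} The only substantive computation is to verify $u_0 \in L^2(1,\infty)$. Substituting $\tau = \sqrt{s/c}$ (with $\tau_0 = 1/\sqrt{c} = \alpha/2$) and integrating by parts,
\[
\int_1^\infty u_0(s)^2\,\ds = 2c\,e^{\alpha/2}\int_{\tau_0}^\infty \bigl(-\tau^2 g'(\tau)\bigr)\,d\tau = 2c\,e^{\alpha/2}\left(\tau_0^2\,g(\tau_0) \;-\; \lim_{\tau\to\infty}\tau^2 g(\tau) \;+\; 2\int_{\tau_0}^\infty \tau\,g(\tau)\,d\tau\right).
\]
The boundary term $\tau^2 g(\tau)$ vanishes in the limit because $g$ is decreasing and $t\,g(t)$ is integrable: since $\int_{\tau/2}^{\tau} s\,g(s)\,\ds \geq \tfrac{3}{8}\tau^2 g(\tau)$ and the left-hand side tends to $0$ by the hypothesis, we must have $\tau^2 g(\tau) \to 0$. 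The remaining integral is finite by assumption, so $u_0 \in L^2(1,\infty)$.

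\textbf{Expected obstacle.} The main point requiring care is that the mode-wise solutions genuinely assemble into an $H$-valued $C^2$ trajectory of the heavy-ball equation with initial data $(u_0,0)$, despite the singular coefficient $\alpha/t$ at $t = 0$. This should be manageable: the pointwise energy identity $\tfrac{1}{2}\dot u(t,s)^2 + \tfrac{1}{2s}u(t,s)^2 \leq \tfrac{1}{2s}u_0(s)^2$ already yields $|u(t,s)| \leq |u_0(s)|$, so that $\|u(t,\cdot)\|_{L^2} \leq \|u_0\|_{L^2}$ uniformly in $t$; differentiability as an $H$-valued curve then follows from dominated convergence and the bounded spectral calculus of the multiplication operator; and the singular initial-condition analysis is identical mode-by-mode to the scalar Nesterov oscillator. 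The only genuinely delicate point is propagating the overdamped lower bound uniformly across the continuum of modes, but since the modes are independent, this reduces to the scalar case already established.
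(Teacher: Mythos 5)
Your proposal is correct and follows essentially the same route as the paper's proof: the same quadratic form on $L^2(1,\infty)$, the same mode-wise decoupling into Nesterov oscillators with $\mu(s)=1/s$, the same overdamped lower bound for $s\gtrsim t^2$, and the same choice of $u_0$ whose feasibility reduces via $\tau=\sqrt{s/c}$ to the hypothesis $\int_0^\infty t\,g(t)\,\dt<\infty$. If anything, your write-up is slightly more careful than the paper's (the squared constant $e^{-\alpha/2}$ and the explicit treatment of the initial interval $[1,\alpha/2]$).
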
 

\begin{proof}
    {\bf Step 1.} Again, we identify $H$ isometrically with $L^2(1,\infty)$ and consider
    \[
    F:H\to\R, \qquad F(u) = \frac12\int_1^\infty \frac{u^2(s)}s\ds.
    \]
    The heavy-ball ODE acts pointwise in $s$, so by the analysis for the Nesterov oscillator with $\mu = 1/s$, we see that
    \[
    u(t)(s) \geq \exp(-\alpha/4) \,u_0(s) \qquad\forall\ s>0, \: t\in \left(0, \frac{\alpha \sqrt s}2\right).
    \]
    In particular, we have
    \[
    F(u(t)) \geq \exp(-\alpha/4) \int_{4t^2/\alpha^2}^\infty \frac{u_0^2(s)}s\ds 
    \]
    
    {\bf Step 2.} From the proof of Lemma \ref{lemma slow convergence}, we recall that for any monotone decreasing integrable function $\tilde g:\R\to\R$, there exists $u_0\in H$ such that
    \[
        \int_t^\infty \frac{u_0^s(s)}s\ds \geq \tilde g(t).
    \]
    Combining these arguments, we note that for any monotone decreasing integrable function $\tilde g:(0,\infty)\to (0,\infty)$ there exists $u_0\in H$ such that 
    \[
    F(u(t)) \geq \exp(-\alpha/4)\int_{4t^2/\alpha^2}^\infty\frac{u_0^2(s)}s\ds \geq \tilde g(t^2).
    \]
    Now note that 
    \[
    \int_0^\infty g(\sqrt t)\dt = 2\int_0^\infty g(\sqrt t)\,\frac{\sqrt t}{2\sqrt t}\dt = 2 \int_0^\infty g(s)\,s\ds,
    \]
    i.e.\ $\tilde g(t):= g(\sqrt t)$ is integrable if and only if $t\cdot g(t)$ is integrable. The decrease condition concerns $\tilde g$ and thus $g$, not $t\cdot g$.
\end{proof}

A slight mismatch remains between Theorem \ref{theorem nesterov decay} and Lemma \ref{lemma nesterov lower bound}: We have shown that $t\cdot (f(x_t) - \inf f)$ is integrable (but possibly non-monotone) and on the other hand that $f(x_t) -\inf f$ can decay as slowly as any {\em monotone} function for which $\int_0^\infty t(f(x_t) - \inf f)\dt < +\infty$.

\subsection{Heavy ball dynamics: No minimizers}

In Section \ref{section no minimizers}, we have shown that the risk decay along a gradient flow can be arbitrarily slow for convex functions without minimizers. Here, we prove the same for heavy ball dynamics.



\begin{lemma}
Let $g:[0,\infty)\to\R$ be a convex function such that $\lim_{x\to\infty} g(x) = 0$. Then for any $\alpha>0$, the solution to the Nesterov ODE
\[
\ddot x = - \frac\alpha t- f'(x), \quad x_0 = 0, \quad \dot x_0 = 0, \qquad f(x) = g\left(\frac{x}{2\,\sqrt{g(0)}}\right)
\]
satisfies $f(x(t)) \geq g(t)$. 
\end{lemma}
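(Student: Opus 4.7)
The plan is to use the standard energy/dissipation identity for the heavy ball ODE to bound the position $x(t)$ linearly in $t$, and then exploit monotonicity of $g$ to turn a position upper bound into a lower bound on $f(x(t))$. The rescaling factor $2\sqrt{g(0)}$ in the definition of $f$ is chosen precisely so that the resulting velocity bound propagates into the inequality $f(x(t))\geq g(t)$.

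First, I would record two elementary consequences of the hypotheses on $g$. Since $g$ is convex on $[0,\infty)$ with $\lim_{x\to\infty}g(x)=0$, the function $g$ must be monotone non-increasing (if not, convexity would force $g(x)\to+\infty$) and hence non-negative, with $g(0)\geq 0$; the case $g(0)=0$ forces $g\equiv 0$ and the conclusion is trivial, so I assume $g(0)>0$. Consequently $f(x)=g\bigl(x/(2\sqrt{g(0)})\bigr)$ is also convex, monotone non-increasing, and non-negative, with $f(0)=g(0)$ and $f'\leq 0$.

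Next, I would introduce the total energy
\[
E(t) \;=\; \tfrac12\dot x(t)^2 + f(x(t)).
\]
Differentiating and inserting the ODE $\ddot x = -\tfrac\alpha t\dot x - f'(x)$ gives
\[
E'(t) \;=\; \dot x\,\ddot x + f'(x)\,\dot x \;=\; -\tfrac\alpha t\,\dot x(t)^2 \;\leq\; 0,
\]
so $E$ is non-increasing on $(0,\infty)$. Because $\dot x(0)=0$ and $x(0)=0$, one has $E(0^+)=f(0)=g(0)$, hence $E(t)\leq g(0)$ for all $t>0$. Since $f\geq 0$ this yields $\tfrac12\dot x(t)^2 \leq g(0)$, i.e.\ $|\dot x(t)|\leq \sqrt{2\,g(0)}$ for all $t>0$.

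Integrating from $0$ and using $x(0)=0$ gives
\[
x(t) \;=\; \int_0^t \dot x(s)\,\d s \;\leq\; \int_0^t |\dot x(s)|\,\d s \;\leq\; \sqrt{2\,g(0)}\,t \;\leq\; 2\sqrt{g(0)}\,t,
\]
where the last inequality uses $\sqrt{2}<2$. Dividing by $2\sqrt{g(0)}$ and invoking that $g$ is monotone non-increasing, I conclude
\[
f(x(t)) \;=\; g\!\left(\frac{x(t)}{2\sqrt{g(0)}}\right) \;\geq\; g(t),
\]
which is the desired inequality. The only genuinely delicate point is the meaning of the ODE at $t=0$, where the friction coefficient $\alpha/t$ is singular; however, since the initial velocity vanishes this causes no real trouble (one interprets the solution as the limit of solutions starting at $t=\eps>0$, or simply notes that the energy identity and the bound $E(t)\leq g(0)$ remain valid on any interval $(\eps,\infty)$ and pass to $\eps\to 0$). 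No heavier analytic machinery is needed.
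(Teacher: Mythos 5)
Your proposal is correct and follows essentially the same route as the paper: the energy dissipation identity for $E(t)=\tfrac12\dot x^2+f(x)$ gives $|\dot x(t)|\leq\sqrt{2\,g(0)}$, hence $x(t)\leq\sqrt{2\,g(0)}\,t$, and monotonicity of $g$ converts this into $f(x(t))\geq g(t)$. Your treatment is in fact slightly more careful than the paper's, which also addresses the degenerate case $g(0)=0$ and the singularity of the friction coefficient at $t=0$.
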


The strategy of proof is as follows: There is only so much total (kinetic and potential) energy in the system, and it is dissipated by the dynamics. Even if energy were conserved and totally translated into kinetic energy, this would bound the speed at which we move towards $\pm \infty$. Thus, if the function $x\mapsto f(x)$ decays slowly, so does $t\mapsto f(x_t)$.

\begin{proof}
Let $f:\R^d\to\R$ be any convex function and $x$ a solution to the Nesterov ODE 
\[
\ddot x = -\frac\alpha t\,\dot x - \nabla f(x), \qquad x_0 = x_0, \qquad\dot x_0 = 0.
\]
Note that
\[
\frac{d}{dt} \left(\frac{|\dot x|^2}2 + f(x) \right) = \dot x\,\ddot x + \nabla (x)\,\dot x = \left(\ddot x+ \nabla f(x)\right) \cdot \dot x = - \frac{\alpha}t \,\|\dot x\|^2\leq 0.
\]
In particular 
\[
\|\dot x\|^2 \leq 2\left(f(x) + \frac{\|\dot x\|^2}2\right) \leq 2\,f(x_0) \quad \Ra\quad \|x(t) - x_0\| \leq \int_0^t \|\dot x(s)\|\ds \leq \sqrt{2\,f(x_0)} t
\]
and hence
\[
f(x(t)) \geq \inf\left\{ f(z) : z\in B\big(x_0, \sqrt{2\,f(x_0)}t\big)\right\}.
\]
If $f:[0,\infty)\to \R$ is such that $\lim_{x\to \infty}f(x) = 0$, then $0= \inf f$ and $f$ is strictly monotone decreasing.
Hence, if $x_0=0$, then
\[
f(x(t)) \geq f\left(\sqrt{2\,f(0)}\, t\right).\qedhere
\]
\end{proof}

We have seen in the proof of Lemma \ref{lemma gf no minimizers} that there is no difference between the decay rate at infinity achievable by monotone decreasing or convex functions. In particular, also solutions to the heavy ball ODE with Nesterov momentum can decay arbitrarily slowly at infinity.

\appendix

\section{A comparison principle for convex functions with derivatives in \texorpdfstring{$L^{1/2}$}{L1/2}}\label{apppendix l12 derivative}

\lemmaa

The proof \sw{barely uses} properties of the square root function other than its concavity. We focus on this case for simplicity, but \sw{we believe that} the claim remains valid for general concave functions of the (negative) derivative. \sw{The claim is obvious for instance if we replace $\sqrt{-G'}$ by $-G'$ itself, since the integral only depends on $G(0)$ and $\lim_{z\to\infty}G(z)$. A property of the type $\sqrt{\cdot}' = \infty$ at $0$ is used for technical purposes in Step 3 of the variational proof.}

\sw{As the proof is somewhat technical, we present two proofs of Lemma \ref{square-root-integral-lower-bound} of very different flavor: One combinatorial, one variational, in order to make the result approachable to readers from different backgrounds.} The key to the \sw{combinatorial} proof will be the following well-known Proposition from the theory of majorization (see for instance \cite{marshall1979inequalities}).
\begin{proposition}\label{majorization-proposition}
    Let $a_1 \geq a_2 \geq \cdots a_n \geq 0$ and $b_1 \geq b_2 \geq \cdots b_n \geq 0$ be decreasing sequences which satisfy
    \begin{equation}\label{decreasing-sequence-condition}
        \sum_{j=i}^n b_j \geq \sum_{j=i}^n a_j
    \end{equation}
    for all $i=1,...,n$. Then the sequence $b_i$ is pointwise greater than an average of permutations of the sequence $a_i$. Specifically, letting $S_n$ denote the symmetric group on $n$ elements, there exists a map $\alpha:S_n\rightarrow \mathbb{R}_{\geq 0}$ satisfying
    \begin{equation}
        \sum_{\pi\in S_n} \alpha(\pi) = 1,
    \end{equation}
    such that
    \begin{equation}
        b_i \geq \sum_{\pi\in S_n}\alpha(\pi)a_{\pi(i)}.
    \end{equation}
    for $i=1,...,n$.
\end{proposition}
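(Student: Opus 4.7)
The plan is to reduce the claim to the classical Hardy--Littlewood--P\'olya theorem on strong majorization, combined with Birkhoff's theorem on doubly stochastic matrices. Recall that Hardy--Littlewood--P\'olya states: if sorted-decreasing vectors $c, a \in \R^n$ satisfy $\sum_{i=1}^k c_i \leq \sum_{i=1}^k a_i$ for every $k$ with equality at $k = n$, then $c = Pa$ for some doubly stochastic matrix $P$; and Birkhoff's theorem expresses any such $P$ as a convex combination of permutation matrices. It therefore suffices to produce a vector $c$ with $c \leq b$ componentwise that is strongly majorized by $a$ in this sense, since then writing $P = \sum_\pi \alpha(\pi) P_\pi$ gives $b_i \geq c_i = \sum_\pi \alpha(\pi) a_{\pi(i)}$, which is exactly the claim.

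To construct $c$, I will use a ``water-filling from above'' procedure: set $c_i := \min(b_i, t^*)$, where the threshold $t^* \geq 0$ is chosen so that $\sum_i c_i = \sum_i a_i$. Such a $t^*$ exists by the intermediate value theorem, since $t \mapsto \sum_i \min(b_i, t)$ is continuous and nondecreasing, vanishing at $t = 0$ and attaining $\sum_i b_i \geq \sum_i a_i$ as $t \to \infty$ (the latter being the $i = 1$ instance of the hypothesis). The vector $c$ is automatically sorted weakly decreasing because $b$ is, and satisfies $c \leq b$ componentwise by construction.

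The crux is verifying the majorization inequalities $\sum_{i=1}^k c_i \leq \sum_{i=1}^k a_i$ for each $k$. Let $m$ denote the largest index with $b_m > t^*$, so that $c_i = t^*$ for $i \leq m$ and $c_i = b_i$ for $i > m$. For $k \geq m$, one can rewrite $\sum_{i=1}^k c_i = \sum_i a_i - \sum_{i > k} b_i$, so the desired inequality reduces directly to the tail hypothesis $\sum_{i > k} b_i \geq \sum_{i > k} a_i$. For $k \leq m$, the partial sum equals $k t^*$; the tail hypothesis applied at index $m+1$ forces $m t^* \leq \sum_{i=1}^m a_i$, and the monotonicity of $a$ gives $\frac{1}{m}\sum_{i=1}^m a_i \leq \frac{1}{k}\sum_{i=1}^k a_i$, yielding $k t^* \leq \sum_{i=1}^k a_i$.

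The main obstacle I anticipate is bookkeeping: cleanly distinguishing the regime of entries ``saturated at $t^*$'' from entries equal to $b_i$, handling the boundary cases $m = 0$ and $m = n$, and matching each regime to the appropriate instance of the tail hypothesis. Once that is organized, the computation is essentially one line per case, and the conclusion follows by applying Hardy--Littlewood--P\'olya to $c$ and $a$ to obtain a doubly stochastic $P$ with $c = Pa$, decomposing $P$ via Birkhoff, and combining with $c \leq b$ to obtain the pointwise estimate.
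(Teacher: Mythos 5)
Your proposal is correct, and it takes a genuinely different route from the paper. The paper proves the proposition from scratch by induction on $n$: it first treats the case $b_i \geq a_i$ for $i\geq 2$ by explicitly distributing the deficit $a_1-b_1$ over transpositions $\tau_i$ with weights $\alpha(\tau_i)=q_i/(a_1-a_i)$, and then handles the general case by applying the inductive hypothesis to the tail sequences and composing the two averaging maps. You instead reduce the tail-sum (weak) domination to classical majorization by a thresholding/water-filling construction: $c_i=\min(b_i,t^*)$ with $t^*$ fixed so that $\sum_i c_i=\sum_i a_i$, after which $c\leq b$ componentwise, $c$ is sorted, and the top-$k$ inequalities $\sum_{i\leq k}c_i\leq\sum_{i\leq k}a_i$ follow from the tail hypothesis (for $k\geq m$) and from $t^*\leq \frac1m\sum_{i\leq m}a_i\leq\frac1k\sum_{i\leq k}a_i$ (for $k\leq m$); Hardy--Littlewood--P\'olya then gives $c=Pa$ with $P$ doubly stochastic, and Birkhoff yields the convex combination of permutations. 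I checked the case analysis, including the boundary cases $m=0$ and $m=n$ and the existence of $t^*\in[0,b_1]$, and it is sound; the only caveat is that your argument is non-self-contained (it leans on HLP and Birkhoff as black boxes, and the weights $\alpha(\pi)$ are only implicit in the Birkhoff decomposition), whereas the paper's induction is elementary and produces the averaging map explicitly, which is why the authors chose to include it. In exchange, your route is shorter and conceptually cleaner: it exhibits the proposition as a weak-majorization version of Rado's theorem, with the water-filling vector interpolating between $b$ and something exactly majorized by $a$.
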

Although this Proposition is well-known, we give the simple proof for the reader's convenience.
\begin{proof}
    When $n=1$ the statement is trivial. We proceed by induction on $n$. 
    
    Suppose first that $b_i \geq a_i$ for $i=2,...,n$. Let $\tau_i\in S_n$ denote the transposition which swaps the $i$-th element and the first element for $i=2,...,n$ and let $e\in S_n$ denote the identity permutation. If $b_1 \geq a_1$, we can simply take $\alpha(e) = 1$ and $\alpha = 0$ for all other permutations. 
    
    Otherwise, we must have $a_1 > b_1 \geq a_i$ for all $i \geq 2$. In this case, the condition \eqref{decreasing-sequence-condition} implies that
    \begin{equation}
        \sum_{i=2}^n (b_i - a_i) \geq a_1 - b_1.
    \end{equation}
    Choose any numbers $0 \leq q_i \leq b_i - a_i$ such that $\sum_{i=2}^n q_i = a_1 - b_1$ and set
    \begin{equation}
        \alpha(\tau_i) = \frac{q_i}{a_1 - a_i}
    \end{equation}
    for $i=2,...,n$, $\alpha(e) = 1 - \sum_{i=2}^n\alpha(\tau_i)$, and $\alpha = 0$ for all other permutations. We claim that $\alpha$ defined in this way is non-negative, for which we must check that
    \begin{equation}
        \sum_{i=2}^n\alpha(\tau_i) = \sum_{i=2}^n \frac{q_i}{a_1 - a_i} \leq \sum_{i=2}^n \frac{q_i}{a_1 - b_1} = 1, 
    \end{equation}
    which follows from the choice of $q_i$. For each $i=2,...,n$ we calculate that
    \begin{equation}
        \sum_{\pi\in S_n}\alpha(\pi)a_{\pi(i)} = a_i + \alpha(\tau_i)(a_1 - a_i) = a_i + q_i \leq b_i,
    \end{equation}
    since $q_i \leq b_i - a_i$. For $i=1$ we calculate
    \begin{equation}
        \sum_{\pi\in S_n}\alpha(\pi)a_{\pi(1)} = a_1 - \sum_{i=2}^n\alpha(\tau_i)(a_1 - a_i) = a_1 - \sum_{i=2}^n q_i = b_1.
    \end{equation}
    This completes the proof in the case where $b_i \geq a_i$ for $i=2,...,n$.
    
    For the general case, the inductive assumption applied to the tail sequences $a_2,...,a_n$ and $b_2,...,b_n$ implies that there exists a map $\gamma:S_{n}\rightarrow \mathbb{R}_{\geq 0}$ such that $\sum_{\pi\in S_n} \gamma(\pi) = 1$, $\gamma(\pi) = 0$ if $\pi(1) \neq 1$ (i.e. the map is supported on the set of permutations of $\{2,...n\}$), and
    \begin{equation}
        b_i \geq \sum_{\pi\in S_n}\gamma(\pi)a_{\pi(i)}
    \end{equation}
    for $i=2,...,n$. Set
    $$
        \tilde{a}_i := \sum_{\pi\in S_n}\gamma(\pi)a_{\pi(i)}
    $$
    for $i=1,...,n$ (note that $\tilde{a}_1 = a_1$ since $\gamma(\pi) = 0$ if $\pi(1) \neq 1$).
    We complete the proof by apply the previous part to the sequences $b_1,...,b_n$ and $\tilde{a}_1,\tilde{a}_2,...,\tilde{a}_n$ (denoting the resulting averaging map by $\beta$) and noting that for each $i=1,...,n$
    \begin{equation}
    \begin{split}
        b_i \geq \sum_{\sigma\in S_n}\beta(\sigma)\tilde{a}_{\sigma(i)} &= \sum_{\sigma\in S_n}\beta(\sigma)\sum_{\pi\in S_n}\gamma(\pi)a_{\pi(\sigma(i))}\\
        &= \sum_{\pi\in S_n}\left(\sum_{\sigma\in S_n}\beta(\sigma)\gamma(\pi\sigma^{-1})\right)a_{\pi(i)}.
    \end{split}
    \end{equation}
    Setting
    \begin{equation}
        \alpha(\pi) = \sum_{\sigma\in S_n}\beta(\sigma)\gamma(\pi\sigma^{-1}),
    \end{equation}
    noting that $\alpha(\pi) \geq 0$ since $\beta$ and $\gamma$ are non-negative, and that
    \begin{equation}
        \sum_{\pi\in S_n}\alpha(\pi) = \sum_{\sigma\in S_n}\beta(\sigma)\sum_{\pi\in S_n}\gamma(\pi\sigma^{-1}) = \sum_{\sigma\in S_n}\beta(\sigma) = 1
    \end{equation}
    completes the proof.
\end{proof}

\begin{proof}[\sw{Combinatorial} Proof of Lemma \ref{square-root-integral-lower-bound}]
    Note first that since $G$ and $g$ are convex, they are differentiable almost everywhere \cite{rockafellar1997convex}. By considering the interval $[\epsilon,\infty)$ with $\epsilon\rightarrow 0$ it suffices to consider the case where $-g'(0),-G'(0) < \infty$. 
    
    The result follows from the stronger statement that if $g,G$ are non-negative convex functions with $\lim_{t\rightarrow \infty} g(t) = \lim_{t\rightarrow \infty} G(t) = 0$ and $G(t) \geq g(t)$, then
    \[
    \int_0^\infty \sqrt{-g'(t)}\dt \leq \int_0^\infty \sqrt{-G'(t)}\dt.
    \]
    Assume to the contrary that there exist $g,G$ satisfying these assumptions but for which
    \[
    \int_0^\infty \sqrt{-g'(t)}\dt > \int_0^\infty \sqrt{-G'(t)}\dt.
    \]
    Since $\lim_{t\rightarrow \infty} G(t) = 0$ and $g \geq 0$, this implies that there exists a $T < \infty$ such that
    \begin{equation}
        \int_0^T \sqrt{-g'(t)}\dt + \sqrt{g(T)} > \int_0^T \sqrt{-G'(t)}\dt + \sqrt{G(T)}.
    \end{equation}
    Since $G(T) \geq g(T)$ by assumption, we also have
    \begin{equation}\label{g-G-integral-bound-1352}
        \int_0^T \sqrt{-g'(t)}\dt + c\sqrt{g(T)} > \int_0^T \sqrt{-G'(t)}\dt + c\sqrt{G(T)}
    \end{equation}
    for any $c\leq 1$. 
    
    The functions $-g'$, $-G'$, $\sqrt{-g'}$, and $\sqrt{-G'}$ are bounded and decreasing (due to convexity), and so are Riemann integrable on $[0,T]$. This means that for any $\epsilon > 0$ we can choose an $N$ sufficiently large so that
    \begin{equation}\label{riemann-integral-bound}
        \frac{1}{N}\sum_{i=1}^{N} f(Ti/N) - \epsilon \leq \int_0^T f\dt \leq \frac{1}{N}\sum_{i=0}^{N-1} f(Ti/N) + \epsilon
    \end{equation}
    for each of the functions $f = -g',-G',\sqrt{-g'},\sqrt{-G'}$.
    
    Since the function $-g'(t)$ is decreasing, we have
    $$
        \frac{-1}{N}g'(T(i+1)/N) \leq \int_{Ti/N}^{T(i+1)/N} -g'(t)dt \leq \frac{-1}{N}g'(Ti/N).
    $$
    Combined with the estimate \eqref{riemann-integral-bound}, this implies that
    \begin{equation}\label{integral-sum-comparison-square-root}
        \left|\int_{0}^T\sqrt{-g'(t)}\dt - \sum_{i=0}^{N-1} \frac{1}{\sqrt{N}}\sqrt{\int_{Ti/N}^{T(i+1)/N} -g'(t)dt}\right| < \epsilon+\frac{1}{N}\sup_{t > 0}\sqrt{-g'(t)}.
    \end{equation}
    We obtain an analogous bound for $G$. 
    
    Consider two sequences defined by
    \begin{equation}
    \begin{split}
        a_i = \int_{T(i-1)/N}^{Ti/N} -g'(t)dt,~i=1,...,N,~a_{N+1} = \int_T^\infty -g'(t)dt = g(T)\\
        b_i = \int_{T(i-1)/N}^{Ti/N} -G'(t)dt,~i=1,...,N,~b_{N+1} = \int_T^\infty -G'(t)dt = G(T).
    \end{split}
    \end{equation}
    The sequences $a_i$ and $b_i$ are decreasing by the convexity of $g$ and $G$ and since $G(t) \geq g(t)$ we have
    \begin{equation}
        \sum_{j=i}^{N+1} b_j \geq \sum_{j=i}^{N+1} a_j
    \end{equation}
    for all $i=1,...,N+1$. However, using \eqref{g-G-integral-bound-1352} and \eqref{integral-sum-comparison-square-root}, choosing $c = 1/\sqrt{N}$ and $N$ large enough (i.e. $\epsilon$ small enough), we get
    \begin{equation}
        \sum_{i=1}^{N+1} \sqrt{b_i} < \sum_{i=1}^{N+1} \sqrt{a_i}. 
    \end{equation}
    However, this contradicts Proposition \ref{majorization-proposition} and Jensen's inequality, which completes the proof.
\end{proof}

\begin{proof}[Variational Proof of Lemma \ref{square-root-integral-lower-bound}]
{\bf Preliminaries.} Since $g, G$ are convex, they are (twice) differentiable almost everywhere. Since $\lim_{x\to\infty}g(x) = \lim_{x\to\infty}G(x) = 0$, they are monotone decreasing. Taking both together, we see that i.e.\ the integrands $\sqrt{-g'}, \sqrt{-G'}$ is well-defined. Since $\sqrt{-g'}, \sqrt{-G'}$ are monotone decreasing functions, they are (Riemann and Lebesgue) integrable on finite intervals. Their integrals over $(0,\infty)$ are well-defined (albeit potentially infinite).

{\bf Step 1. Reduction.} We implicitly assume that $g(t)>0$ for all $t>0$ since otherwise $g\equiv 0$ on an interval $[T, \infty)$ and thus $\int_0^\infty\sqrt{-g'(t)}\dt < \infty$. Up to a rescaling and translation, we may assume that $G\geq g>0$ on $[0,\infty)$ and that $g(0)$ and $g'(0)$ are finite. Under these stronger assumptions, we prove the stronger statement that 
\begin{equation}\label{eq G geq g}
\int_0^\infty \sqrt{-g'(t)}\dt \leq \int_0^\infty \sqrt{-G'(t)}\dt.
\end{equation}

{\bf Step 2. Representation by derivatives.} 
We note that
\[
g(t) = -\int_t^\infty g'(s)\ds, \qquad G(t) = -\int_t^\infty G'(s)\ds.
\]
and re-write the problem in terms of the non-negative monotone decreasing functions $\phi := -g'$ and $\psi:=-G'$. For this, we consider the convex set
\[
K_\phi:= \left\{\psi\in L^1(0,\infty) : \psi\text{ monotone decreasing}, \:\:\int_t^\infty \psi\ds \geq \int_t^\infty \phi\ds\quad\forall\ t>0\right\}
\]
and the `energy' functional
\[
E:K_\phi \to (0,\infty), \qquad E(\psi) = \int_0^\infty \sqrt\psi\dt.
\]
For the bijection between $G$ and $\psi = -G'$, denote $G_\psi(t) = \int_t^\infty \psi(s)\ds$.

{\bf Intermezzo: Proof strategy.} We will show that the functional $E$ has a minimizer $\psi^*$ in $K_\phi$ (Step 3). By constructing energy competitors, we will argue that if $G_\psi>g$ somewhere, then there exists $\tilde\psi$ such that $E(\tilde\psi)<E(\psi)$, i.e.\ $\psi \neq \psi^*$. The construction of energy competitors is the content of Step 4. This immediately implies that $\psi^*$ is such that $G_{\psi^*} = g = G_\phi$ everywhere on $(0,\infty)$, which concludes the proof. 

{\bf Step 3. Existence of minimizers.} If there exists no $\psi \in K_\phi$ such that $E(\psi)<+\infty$, then any function is a minimizer. In particular, the Lemma main statement of the Lemma, which can be phrased as: 
\[
E(\phi) = +\infty \qquad\Ra\quad E(\psi) = +\infty \quad\forall\ \psi \in K_\phi,
\]
holds. We now consider the case where there exists some $\psi \in K_\phi$ such that $E(\psi)<+\infty$. In this step, we prove that then there also exists $\psi^*\in K_\phi$ such that $E(\psi^*) = \inf_{\psi \in K_\phi} E(\psi)<+\infty$.

The existence of a minimizer is established by the direct method of the calculus of variations. Let $\psi_n\in K_\phi$ be a sequence such that $\lim_{n\to \infty} E(\psi_n) = \inf_{\psi \in K_\phi} E(\psi)$. In particular, we may assume that 
\[
E(\psi_n) \leq 1+ \inf_{\psi \in K_\phi} E(\psi)=: C \qquad \forall\ n\in\N.
\]
Then $|\{t: \psi_n(t)>R\}| \leq C/\sqrt{R}$ by Chebyshev's inequality and since $\psi$ is monotone decreasing, we find that $0\leq \psi_n(t) \leq R$ for $t>C/\sqrt{R}$ and for all $n\in\N$. Since $\psi_n$ is bounded on $[\eps,\infty)$ for all $\eps>0$ and monotone decreasing, we find that the sequence $\psi_n$ is bounded in $BV(a,b)$ for any $0<a<b<\infty$. In particular, there exists $\psi^*$ such that $\psi_n\to \psi^*$ in $L^p(a,b)$ for any $p<\infty$ due to the compact embedding of $BV$ into all Lebesgue spaces $L^p$ for finite $p$ in one dimension. We deduce that
\[
\int_a^b \sqrt{\psi^*(t)}\dt = \lim_{n\to\infty} \int_a^b\sqrt{\psi_n(t)}\dt \leq \liminf_{n\to\infty} \int_0^\infty \sqrt{\psi_n(t)}\dt.
\]
Since the inequality holds independently of $a,b$, we can send $a\to0$ and $b\to\infty$ to obtain 
\[
E(\psi^*) = \int_0^\infty \sqrt{\psi^*(t)}\dt\leq \liminf_{n\to\infty} \int_0^\infty \sqrt{\psi_n(t)}\dt = \inf_{\psi \in K_\phi} E(\phi) = \inf_{\psi \in K_\phi} E(\psi).
\]
It remains to show that $\psi^*\in K_\phi$. Observe first that for any $\psi\in K_\phi$ we have
\[
G_\psi(T) = \int_T^\infty \psi(t)\dt \leq \sqrt{\psi(T)} \int_T^\infty \sqrt{\psi(t)}\dt \leq \frac{2\int_{0}^\infty \sqrt{\psi(t)}\dt} {t}\cdot \int_0^\infty\sqrt{\psi(t)}\dt.
\]
In particular, for every $\eps>0$ there exists $T>0$ such that $G_{\psi}(T)<\eps$ for all $\psi \in K_\phi$ satisfying the energy bound $E(\psi)\leq C$. We conclude that 
\[
G_{\psi^*}(t) \geq \int_t^{T} \psi^*(t)\dt = \lim_{n\to\infty}\int_t^T \psi_n(t)\dt \geq \lim_{n\to\infty} \int_t^\infty \psi_n(t)\dt - \eps \geq \lim_{n\to\infty} g_{\psi_n}(t) - \eps \geq g(t) -\eps
\]
for all $t<T$. Since this holds for any $\eps>0$ and we can choose $T$ larger if we desire, we have $G_{\psi^*}(t)\geq g(t)$ for all $t\in(0,\infty)$.

%

{\bf Step 4. Identifying the minimizer.} In the following, we will show that $\psi^*=\phi$ to conclude the proof. This step will be partitioned into several arguments:
\begin{enumerate}
\item First, we show that $\psi^*$ must be piecewise constant in the set $\{G_{\psi^*}>g\}$.
\item Then, we show that $\psi^*$ is piecewise constant with at most one jump in connected components of $G_{\psi^*}>g$.
\item Finally, we see that the piecewise constant function is not energy-optimal unless $\psi^*=\phi$ (i.e., unless $\phi$ itself is piecewise constant with one jump).
\end{enumerate}
In every step, we require a similar but slightly different `energy competitor' argument.

{\bf Step 4.1. Step function structure.} The two functions
\[
g(t) = \int_t^\infty \phi(s)\ds , \qquad G^*(t) = \int_t^\infty \psi^*(s)\ds
\]
are continuous, so the coincidence set $I^* = \{t > 0 : g(t) = G^*(t)\}$ is closed. Assume that $t^*\in (0,\infty)\setminus I^*$, i.e. $G^*(t)>g(t)$ in an interval $(t^*-\eps, t^*+\eps)$. Since both functions are continuous and decreasing, we may assume that
\[
\inf_{t\in (t^*-\eps, t^*+\eps)} G^*(t) = G^*(t^*+\eps)> g(t^*-\eps) = \sup_{t\in(t^*-\eps, t^*+\eps)} g(t)
\]
for sufficiently small $\eps>0$.  This gives us great leeway to modify $\psi^*$ inside the interval $(t^*-\eps, t^*+\eps)$. We will show in this step that $\psi^*$ must be a step function with a single jump in $(t^*-\eps, t^*+\eps)$.

Namely, consider
\[
\tilde\psi(t) = \begin{cases}\psi^*(t^*-\eps) & t^*-\eps\leq t\leq t^\sharp\\ \psi^*(t^*+\eps) &t^\sharp \leq t \leq t^*+\eps\\
\psi^*(t) &\text{else}\end{cases}
\]
where $t^\sharp\in (t^*-\eps, t^*+\eps)$ is chosen such that
\[
\int_{t^*-\eps}^{t^*+\eps} \tilde\psi(t) \dt = \int_{t^*-\eps}^{t^*+\eps} \psi^*(t) \dt.
\]
In particular, we note that
\[
\tilde G(t) : = \int_t^\infty \tilde\psi(s)\ds 
\]
satisfies 
\[
\tilde G(t) \geq \tilde G(t^*+\eps) = G^*(t^*+\eps) > g(t^*-\eps) \geq g(t)\qquad \forall\ t\in (t^*-\eps, t^*+\eps)
\]
and $ G(t) = G^*(t)$ for all other $t$. In other words: $\tilde\psi\in K_\phi$. We claim that $E(\tilde\psi) \leq E(\psi^*)$ and that the inequality is strict unless $\tilde\psi \not\equiv \psi^*$. To see this, we write $\alpha = \psi^*(t^*)$, $\beta:= \psi^*(t^*-\eps)$ and
 \[
 \psi^*(t) = \lambda^*(t) \alpha + \big(1-\lambda^*(t)\big)\beta,\qquad
 \tilde \psi (t) = \tilde \lambda (t) \alpha + \big(1-\tilde\lambda(t)\big)\beta
 \]
 for functions $\tilde\lambda, \lambda^*:(t^*-\eps, t^*)\to\R$. The fact that $\int \tilde\psi = \int\psi^*$ is equivalent to observing that $\int\tilde\lambda = \int\lambda^*$. Since the square root function is concave, we have
 \begin{align*}
 \int_{t^*-\eps}^{t^*+\eps} \sqrt{\psi^*} \ds& = \int_{t^*-\eps}^{t^*+\eps} \sqrt{\lambda^*\alpha + (1-\lambda^*)\beta} \ds \geq \int_{t^*-\eps}^{t^*+\eps} \lambda^*\sqrt\alpha + (1-\lambda^*)\sqrt\beta \ds\\
 	& = \int_{t^*-\eps}^{t^*+\eps} \tilde\lambda\sqrt\alpha + (1-\tilde\lambda)\sqrt\beta \ds = \int_{t^*-\eps}^{t^*+\eps}\sqrt{\tilde\psi}\ds.
 \end{align*}
The inequality is strict unless $\alpha=\beta$ (which implies that $\psi\equiv \psi^*$) or $\lambda^*\in \{0,1\}$ almost everywhere. Since $\psi^*$ and thus also $\lambda^*$ is monotone, together with the integral constraint this means that $\psi^* \equiv \tilde\psi$. A strict inequality is excluded since $E(\psi^*)\leq E(\tilde\psi)$ since $\psi^*$ minimizes $E$ in $K_\phi$. Thus $\psi^*$ must be a s step function with only one step in $(t^*-\eps, t^*+\eps)$.
 
{\bf Step 4.2. Only one step.} Assume that $(a,b)$ is a connected component of the open set $\R\setminus I^*$. Step 4.1 shows that $\psi^*$ is a step function on $(a,b)$ with at most a finite number of jumps in any subinterval $(a',b')$ with $a< a' < b'<b$. If there were an {\em infinite} number of jumps, we could choose an accumulation point for  $t^*$ in Step 4.1 and obtain a contradiction. 

Assume that there are at least {\em two} jumps in $(a,b)$, i.e.\ there exist $a\leq t_1 < t_2 < t_3 < t_4\leq b$ and $z_1>z_2 >z_3$ such that
\[
\psi^*(t) = \begin{cases} z_1 &t_1<t<t_2\\ z_2 & t_2<t<t_3\\ z_3 &t_3 <t<t_4\end{cases}.
\]
We can see by the same strategy as in Step 4.1 that shifting $t_2$ right and $t_3$ left reduces the energy since $z_2$ is a convex combination of $z_1$ and $z_3$. The details are left to the reader. Since $G_{\psi^*}\geq g+\eps$ for some $\eps>0$ on the compact subset $[t_2, t_3]$, small perturbations of this type are admissible. By contradiction, we find that on every connected component $(a,b)$, $\psi^*$ can jump only once.

{\bf Step 4.3. No unbounded component.} Since $G\geq g >0$ and $\lim_{t\to\infty} G(t) = 0$, we note $G'(t)<0$ for all $t$. In particular, by the step function structure we find that $(0,\infty)\setminus I^*$ cannot have an unbounded connected component. 

{\bf Step 4.4. Coincidence at the origin.} We argue that without loss of generality, we may assume that $G(0) = g(0)$. If this is not the case, we can modify $g$ in the spirit of $G_r$ (see the passage after the statement of Lemma \ref{square-root-integral-lower-bound}) to increase $g(0)$ with an arbitrarily small increase in $\int\sqrt{-g'}$.

{\bf Step 4.5. Conclusion.} In this step, we will show that $(0,\infty)\setminus I^* = \emptyset$, i.e.\ that $G_{\psi^*}\equiv g$. Otherwise, this open set has at least one connected component. 

Assume that the interval $(a,b)$ is a connected component of $\R\setminus I^*$. Since $0\in I^*$ and there are no unbounded connected components, we find that $a, b\in I^*$. Thus 
\[
\int_t^b \psi^*(s) \ds > \int_t^b \phi(s)\ds \quad\forall\ t\in (a,b)\qquad\text{and}\quad \int_a^b \psi^*(s) \ds = \int_a^b \phi(s)\ds 
\]
and hence also
\[
 \int_a^t \psi^*(s)\ds < \int_a^t\phi(s)\ds \quad\forall\ t\in(a,b).
\]
For the minimizer $\psi^*(t) = \alpha\,1_{(a,t^\sharp)}(t) + \beta\,1_{(t^\sharp, b)}$ for $\alpha\geq \beta$. We immediately conclude that 
\[
\phi(a^+) := \lim_{t\searrow a} \phi(t) \geq \alpha, \qquad \phi(b^-):= \lim_{t\nearrow b}\phi(t) \leq \beta.
\]
We distinguish three cases. 

\begin{enumerate}
\item $\alpha = \phi(a^+)$. Since $\phi = -g'$ is monotone decreasing, we have
\[
G(t) = G(a) - \int_a^t \psi^*(s)\ds = g(a) - \int_a^t \alpha \ds  \leq g(a) - \int_a^t \phi(s)\ds = g(t) \qquad\forall\ t\in(a,t^\sharp).
\]
Since $G\geq g$, this implies that $G\equiv g$ in $(a, t^\sharp)$ and hence $[a,t^\sharp] \subseteq I$. This contradicts the choice of $(a,b)\subseteq (0,\infty)\setminus I^*$.

\item $\beta = \phi(b^-)$. A contradiction follows by the same argument.

\item So far, we have only used the monotonicity of $\phi$, the step function properties of $\psi^*$ and the fact that $g\leq G_{\psi^*}$. If we have strict inequalities
\[
\alpha < \phi(a^+), \qquad  \phi(b^-)< \beta,
\]
we are using an energy argument, i.e.\ we show that the $\psi^*$ cannot be energy optimal on $(a,b)$. This is not surprising -- we can increase $\alpha$ and decrease $\beta$ slightly without violating the constraints of our problem. As in previous arguments, this reduces the energy. The remainder of this proof is dedicated to the finer details of this argument.
\end{enumerate}

Let $\xi>0$ be such that $a+\xi < t^\sharp < b-\xi$ and consider an energy competitor 
\[
\psi^\eps(t) = \begin{cases} \alpha + \eps& a < t < a+\xi\\ \beta - \eps & b-\xi < t < b\\ \psi^*(t) &\text{else}\end{cases}.
\]
By construction, we have $G_{\psi^\eps} = G_{\psi^*}$ outside $(a,b)$ since $\int_a^b \psi^\eps = \int_a^b \psi^*$ by construction.
If $\eps>0$ is sufficiently small, then $\psi^\eps(a^+) < \phi(a^+)$ and $\psi^\eps(b^-) > \phi(b^-)$. By definition of $\phi(b^-)$, we conclude that $\psi^\eps(t)>\phi(t)$ for $t$ close to $b$ and thus 
\[
G_{\psi^\eps}(t) = \int_t^b \psi^\eps(s) \ds + G_{\psi^\eps}(b) = \int_t^b \psi^\eps(s) \ds + g(b) \geq \int_t^b\phi(s)\ds + g(b) = g(t).
\]
By a similar argument, we observe that $G_{\psi^\eps}(t) \geq g(t)$ in a neighbourhood of $a$. Without loss of generality, we choose $\eps_0, \delta>0$ such that $G_{\psi^\eps}\geq g$ on $(0, a+ \delta] \cup [b-\delta, \infty)$. On the compact interval $[a+\delta, b-\delta]$, we have 
\[
\min_{t\in[a+\delta, b-\delta]} (G_{\psi^*} - g)(t)>0 \qquad\Ra\quad G_{\psi^*}\geq g + \rho
\]
for some small $\rho>0$. By continuity, we conclude that $G_{\psi^\eps}\geq g$ also on $[a+\delta, b-\delta]$ for sufficiently small $\eps>0$. In total, we have shown that $\psi^\eps \in K_\phi$ for all sufficiently small $\eps$, i.e.\ that $\psi^\eps$ is a valid energy competitor. Since
\begin{align*}
\frac{d}{d\eps}\bigg|_{\eps=0} E(\psi^\eps) &=\frac{d}{d\eps}\bigg|_{\eps=0} \left(\int_a^{a+\xi} \sqrt{\alpha+\eps}\dt + \int_{b-\xi}^b \sqrt{\beta-\eps}\dt\right) = \xi\,\frac{d}{d\eps}\left( \sqrt{\alpha+\eps} + \sqrt{\beta-\eps}\right)\\
	&= \frac{\xi}2\left(\frac1{\sqrt\alpha}-\frac1{\sqrt\beta}\right)<0
\end{align*}
unless $\alpha = \beta$ since $\alpha \geq \beta$ due to the monotonicity of $\psi^*$. If $\alpha=\beta$, the first derivative vanishes: Constant $\psi^*$ is a {\em maximum} in the space of monotone decreasing functions:
\[
\frac{d^2}{d\eps^2}\bigg|_{\eps=0} E(\psi^\eps) = \frac{d}{d\eps}\bigg|_{\eps=0} \frac{\xi}2\left(\frac1{\sqrt{\alpha+\eps}}-\frac1{\sqrt{\alpha-\eps}}\right)= -\frac{\xi}2 \alpha^{-3/2}<0.\qedhere
\]

\end{proof}

\bibliographystyle{./alphaabbr}
\bibliography{./bibliography}

\end{document}